\newcounter{results}[section] 
\theoremstyle{plain}
\newtheorem{theorem}[results]{Theorem}
\newtheorem{lemma}[results]{Lemma}
\newtheorem{proposition}[results]{Proposition}
\newtheorem*{theorem*}{Theorem}
\newtheorem*{lemma*}{Lemma}
\newtheorem*{proposition*}{Proposition}
\newtheorem*{corollary*}{Corollary}
\newtheorem*{exercise*}{Exercise}
\newtheorem*{fact*}{Fact}
\newtheorem*{problem*}{Problem}
\newtheorem*{conjecture*}{Conjecture}
\theoremstyle{remark}
\newtheorem{remark}[results]{Remark}
\newtheorem*{remark*}{Remark}
\newtheorem*{question*}{Question}
\theoremstyle{definition}
\newtheorem*{definition*}{Definition}
\newtheorem*{example*}{Example}
\numberwithin{equation}{section}
\newcommand{\R}{\ensuremath{\mathbb R}} 
\renewcommand{\S}{\ensuremath{\mathbb S}} 
\newcommand{\I}{\ensuremath{\mathcal I}}
\newcommand{\J}{\ensuremath{\mathcal J}}
\DeclarePairedDelimiter{\@tmpabs}{\lvert}{\rvert}
\newcommand{\@absstar}[1]{{\@tmpabs*{#1}}}
\newcommand{\@absnostar}[2][]{{\@tmpabs[#1]{#2}}}
\newcommand{\abs}{\@ifstar\@absstar\@absnostar}
\DeclarePairedDelimiter{\@tmpnorm}{\lVert}{\rVert}
\newcommand{\@normstar}[1]{{\@tmpnorm*{#1}}}
\newcommand{\@normnostar}[2][]{{\@tmpnorm[#1]{#2}}}
\newcommand{\norm}{\@ifstar\@normstar\@normnostar}
\let\div\undefined
\DeclareMathOperator{\div}{div}
	\renewcommand{\comma}{\ensuremath{\, \text{, }}}
	\newcommand{\comma}{\ensuremath{\, \text{, }}}
\definecolor{removed}{RGB}{255, 0, 0}
\definecolor{added}{RGB}{0,192,0}
\crefname{equation}{}{} 
\crefname{enumi}{}{} 
\numberwithin{equation}{section}
\begin{document}

\title{\bf Classification of positive solutions to the H\'enon-Sobolev critical systems\thanks{Supported by National Key R\&D Program of China (2023YFA1010001) and NSFC (12171265 and 12271184). E-mail addresses: zhou-yx22@mails.tsinghua.edu.cn(Zhou),    zou-wm@mail.tsinghua.edu.cn (Zou)}}

\author{{\bf Yuxuan Zhou, Wenming Zou}\\ {\footnotesize \it  Department of Mathematical Sciences, Tsinghua University, Beijing 100084, China.} }

\date{}



\maketitle

\begin{abstract}
{\small In this paper, we investigate positive solutions to the following H\'enon-Sobolev critical system:
\begin{equation*}
\begin{cases}
 -\div(|x|^{-2a}\nabla u)=|x|^{-bp}|u|^{p-2}u+\nu\alpha|x|^{-bp}|u|^{\alpha-2}|v|^{\beta}u&\text{in }\R^n\\
 -\div(|x|^{-2a}\nabla v)=|x|^{-bp}|v|^{p-2}v+\nu\beta|x|^{-bp}|u|^{\alpha}|v|^{\beta-2}v&\text{in }\R^n\\
  u,v\in D_a^{1,2}(\R^n)
  \end{cases}
\end{equation*}
    where $n\geq 3,-\infty< a<\frac{n-2}{2},a\leq b<a+1,p=\frac{2n}{n-2+2(b-a)},\nu>0$ and $\alpha>1,\beta>1$ satisfy $\alpha+\beta=p$. Our findings are divided into two parts based on the sign of the parameter $a$.

    For $a\geq 0$, we demonstrate that any positive solution $(u,v)$ is synchronized, indicating that $u$ and $v$  are constant multiples of positive solutions to the decoupled H\'enon equation:
    \begin{equation*}
        -\div(|x|^{-2a}\nabla w)=|x|^{-bp}|w|^{p-2}w.
    \end{equation*}
     Our approach involves establishing qualitative properties of the positive solutions and employing a refined ODE approach. These qualitative properties include radial symmetry, asymptotic behaviors, modified inversion symmetry, and monotonicity. Our results extend the recent work in \cite{Esp2}, which focuses on the case $a=b$.

    For $a<0$ and $b>a$, we characterize all nonnegative ground states. Specifically, by utilizing on a sharp vector-valued Caffarelli-Kohn-Nirenberg inequality, we find that any ground state is synchronized and can thus be expressed in  terms of ground states of the aforementioned decoupled H\'enon equation. Additionally, we study the nondegeneracy of positive synchronized solutions.

    This work also explores the following $k$-coupled H\'enon-Sobolev critical system:
    \begin{equation*}
    \begin{cases}
        -\div(|x|^{-2a}\nabla u_i)=\sum\limits_{j=1}^{k}\kappa_{ij}|x|^{-bp}|u_i|^{\alpha_{ij}-2}|u_j|^{\beta_{ij}}u_i\quad\text{in }\R^n\\
        u_i\in D_a^{1,2}(\R^n)\quad \forall\ 1\leq i\leq k
    \end{cases}
\end{equation*}
    where $\kappa_{ij}>0$ and $\alpha_{ij}>1,\beta_{ij}>1$ satisfy $\alpha_{ij}+\beta_{ij}=p$. It turns out that most of our earlier arguments  can be applied to this case. One remaining problem is whether similar classification results hold for $k\geq3$. By exploiting insights from \cite{Cho}, we present a uniqueness result under prescribed initial conditions.

 \vskip0.1in
\noindent{\bf Key words:}  H\'enon-Sobolev critical system, qualitative properties, critical nonlinearities.

\vskip0.1in
\noindent{\bf 2020 Mathematics Subject Classification:} 35J61; 35B50; 35B06; 35J47;

}

\end{abstract}

\maketitle

\section{Introduction}

In this paper, we are concerned with positive solutions to the following H\'enon-Sobolev critical system:
\begin{align}\label{sys1}
     \begin{cases}
        -\div(|x|^{-2a}\nabla u)=|x|^{-bp}|u|^{p-2}u+\nu\alpha|x|^{-bp}|u|^{\alpha-2}|v|^{\beta}u&\text{in }\R^n,\\
        -\div(|x|^{-2a}\nabla v)=|x|^{-bp}|v|^{p-2}v+\nu\beta|x|^{-bp}|u|^{\alpha}|v|^{\beta-2}v&\text{in }\R^n,\\
        u,v\in D_a^{1,2}(\R^n),
    \end{cases}
\end{align}
where $n\geq 3,-\infty <a<\frac{n-2}{2},a\leq b<a+1,p=\frac{2n}{n-2+2(b-a)},\nu>0$, and $\alpha>1,\beta> 1$ satisfy $\alpha+\beta=p$. The space $D_a^{1,2}(\R^n)$ is the completion of $C_c^{\infty}(\R^n)$ with respect to the norm
\begin{equation*}
    \norm{u}_{D_a^{1,2}(\R^n)}=\left(\int_{\R^n}|x|^{-2a}|\nabla u|^2\mathrm{d}x\right)^\frac{1}{2}.
\end{equation*}
If we let $v=0$, then \eqref{sys1} reduces to the classical critical H\'enon equation:
\begin{equation}\label{eq1}
    -\div(|x|^{-2a}\nabla u)=|x|^{-bp}|u|^{p-2}u.
\end{equation}
It is well known that \eqref{eq1} is the Euler-Lagrange equation related to the classical Caffarelli-Kohn-Nirenberg inequality:
\begin{equation}\label{ckn}
    \left(\int_{\R^n}|x|^{-bp}|u|^p\mathrm{d}x\right)^{\frac{2}{p}}\leq S(a,b,n)\int_{\R^n}|x|^{-2a}|\nabla u|^2\mathrm{d}x,
\end{equation}
where $S(a,b,n)$ denotes the sharp constant. The equation \eqref{eq1} and the inequality \eqref{ckn} have been extensively studied. When $a\geq 0$, it was discoverd by Chou and Chu in \cite{Cho} that any positive solution $u$ of \eqref{eq1} takes the form (up to a translation if $a=b=0$):
\begin{equation}\label{bubb}
    u(x)=U_{\mu}(x):=\mu^{\frac{2-n-2a}{2}}U\left(\frac{x}{\mu}\right),
\end{equation}
where
\begin{equation}
\begin{aligned}\label{bub}
    &U(x)=K(a,b,n)\left(1+|x|^{\frac{2(n-2-2a)(1+a-b)}{n-2(1+a-b)}}\right)^{-\frac{n-2(1+a-b)}{2(1+a-b)}},\\
    &K(a,b,n)=\left(\frac{n(n-2-2a)^2}{n-2(1+a-b)}\right)^{\frac{n-2(1+a-b)}{4(1+a-b)}}
\end{aligned}
\end{equation}
and $\mu>0$ is a scaling factor. The case $a<0$ is by far more complicated. Based on explicit spectral estimates, Felli and Schneider \cite{Fel} identified the region:
\begin{equation*}
    a<0,\ a<b<b_{\text{FS}}(a):=\frac{n(n-2-2a)}{2\sqrt{(n-2-2a)^2+4n-4}}-\frac{n-2-2a}{2},
\end{equation*}
in which any ground state of \eqref{eq1} is not radially symmetric. Later, Lin and Wang \cite{Lin0} observed that these ground states have exactly $\mathcal{O}(n-1)$ symmetry. It was conjectured for a long time that the Felli-Schneider curve is the threshold between the symmetry and the symmetry-breaking regions. Finally, Dolbeault, Esteban and Loss \cite{Dol} gave an affirmative answer: when $a<0$ and $b_{\text{FS}}(a)\leq b<a+1$, any positive solution $u$ is radially symmetric and takes the form given in \eqref{bubb} and \eqref{bub}. We refer to \cite{Car2,Cat,Dol,Ter} and the references therein for further background and previous works.

\vskip0.1in

Utilizing ideas from \cite{Cat}, we find that the system \eqref{sys1} is equivalent to the Hardy-Sobolev doubly critical system:
\begin{equation}\label{sys2}
    \begin{cases}
        -\div(|x|^{-2\Bar{a}}\nabla \Bar{u})+\gamma|x|^{-2(1+\Bar{a})}\Bar{u}=|x|^{-\Bar{b}p}|\Bar{u}|^{p-2}\Bar{u}+\nu\alpha|x|^{-\Bar{b}p}|\Bar{u}|^{\alpha-2}|\Bar{v}|^{\beta}\Bar{u}&\text{in }\R^n\\
        -\div(|x|^{-2\Bar{a}}\nabla \Bar{v})+\gamma|x|^{-2(1+\Bar{a})}\Bar{v}=|x|^{-\Bar{b}p}|\Bar{v}|^{p-2}\Bar{v}+\nu\beta|x|^{-\Bar{b}p}|\Bar{u}|^{\alpha}|\Bar{v}|^{\beta-2}\Bar{v}&\text{in }\R^n\\
        \Bar{u},\Bar{v}\in D_{\Bar{a}}^{1,2}(\R^n)
    \end{cases}
\end{equation}
where
\begin{equation*}
    \begin{aligned}
        \Bar{a}=a+\sqrt{\lambda^2+\gamma}-\lambda,\quad \Bar{b}=b+\sqrt{\lambda^2+\gamma}-\lambda,\quad \gamma>-\lambda^2,\quad \lambda=\frac{n-2-2\Bar{a}}{2}
    \end{aligned}
\end{equation*}
and
\begin{equation}\label{trans}
    \Bar{u}(x)=|x|^{\sqrt{\lambda^2+\gamma}-\lambda}u(x),\quad \Bar{v}(x)=|x|^{\sqrt{\lambda^2+\gamma}-\lambda}v(x).
\end{equation}

Systems \eqref{sys1} and \eqref{sys2} are intricately related to various physical phenomena. They arise in the Hartree-Fock theory for a binary mixture of Bose-Einstein condensates in two hyperfine states. They also play a significant role in the study of nonlinear optics. For instance, the solutions $u$ and $v$ are linked to the components of the beam in Kerr-like photorefractive media. Further details can be found in \cite{Akh,Esr,Fra,Kiv} and the references therein.
\vskip0.1in
Mathematically, when setting $a=b=0$ and $p=2^*$, the system \eqref{sys1} has been extensively investigated. Numerous studies have focused on the properties of ground states and bound states under various assumptions on the parameters. We refer to \cite{Abd,Abd2,Abd3,Amb,Bar,Car,Che,Cla,Col,Esp,Esp2,Far,Far2,Lin,Mit,Pen,Soa,Wei} for related works. There are also lots of results concerning the classification of positive solutions. We refer to \cite{Che1,Le,Li} and the references therein for general frameworks of equivalent integral systems.
\vskip0.1in
For the case $(a,b)\neq (0,0)$, it was recently presented by Esposito, L\'opez-Soriano and Sciunzi in \cite{Esp2} that when $0\leq a=b$ and $p=2^*$, any positive solution $(u,v)$ of the system \eqref{sys1} must take the form:
\begin{equation*}
    u(x)=c_1U_\mu(x),\quad v(x)=c_2U_\mu(x),
\end{equation*}
where $\mu>0$ and $c_1>0,c_2>0$ satisfy certain restrictions.

 \vskip0.1in

 One of the primary objectives of the current  paper is to extend their results for the system \eqref{sys1} (or the system \eqref{sys2}) to encompass a larger parameter region:
 \begin{equation}\label{zwm=1}
0\leq a\leq b< a+1;\quad  a<\frac{n-2}{2}; \quad p=\frac{2n}{n-2+2(b-a)}.
\end{equation}
We mainly focus on the system \eqref{sys1}, and our findings can be easily applied to the system \eqref{sys2} via the transformation \eqref{trans}.
\begin{theorem}\label{thm1}
    Assume $a\geq 0$. Let $(u,v)\in D_a^{1,2}(\R^n)\times D_a^{1,2}(\R^n)$ be a positive solution to the system \eqref{sys1}. Then there exist constants $\mu_0>0,c_1>0$, and $c_2>0$ such that (if $b=0$, then up to a translation)
    \begin{equation}\label{re1}
        (u,v)=(c_1U_{\mu_0},c_2U_{\mu_0}),
    \end{equation}
    where $U_{\mu_0}$ is defined in \eqref{bubb} and \eqref{bub}. Moreover, $c_1$ and $c_2$ satisfy
    \begin{equation}\label{re2}
        \begin{cases}
            c_1^{p-2}+\nu\alpha c_1^{\alpha-2}c_2^\beta=1,\\
            c_2^{p-2}+\nu\beta c_1^{\alpha}c_2^{\beta-2}=1.
        \end{cases}
    \end{equation}
\end{theorem}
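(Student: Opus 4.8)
\emph{Overall strategy.} The plan is to reduce \eqref{sys1} to an autonomous second order ODE system and then run a ``refined ODE'' analysis. I would first record the preliminary facts: by Moser iteration together with standard elliptic regularity the solution $(u,v)$ lies in $C^{2}(\R^{n}\setminus\{0\})$, is positive by the strong maximum principle, and has finite energy $\|u\|_{D_a^{1,2}}+\|v\|_{D_a^{1,2}}<\infty$. The coupling terms $\nu\alpha|x|^{-bp}u^{\alpha-1}v^{\beta}$ and $\nu\beta|x|^{-bp}u^{\alpha}v^{\beta-1}$ are increasing in the other unknown, so \eqref{sys1} is a cooperative system; consequently the moving‑plane and Kelvin‑transform scheme that Chou and Chu used for the scalar equation \eqref{eq1} when $a\ge 0$ (the scheme that fails, by symmetry breaking, for $a<0$) carries over to the pair $(u,v)$ and yields that $u$ and $v$ are radially symmetric and strictly decreasing about the same point (the origin, up to a translation when $b=0$).

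\emph{Reduction and the qualitative properties.} Writing $u=u(r)$, $v=v(r)$ with $r=|x|$ and performing the Emden--Fowler change of variables $u(r)=r^{-\lambda}U(t)$, $v(r)=r^{-\lambda}V(t)$, $t=\log r$, $\lambda=\tfrac{n-2-2a}{2}>0$ (this is precisely the radial form of \eqref{sys2}), the critical condition $p=\tfrac{2n}{n-2+2(b-a)}$ makes the system autonomous:
\begin{equation*}
  \ddot U=\lambda^{2}U-U^{p-1}-\nu\alpha U^{\alpha-1}V^{\beta},\qquad \ddot V=\lambda^{2}V-V^{p-1}-\nu\beta U^{\alpha}V^{\beta-1},\qquad U,V>0.
\end{equation*}
It is Hamiltonian, with conserved energy $H=\tfrac12(\dot U^{2}+\dot V^{2})-\tfrac{\lambda^{2}}{2}(U^{2}+V^{2})+\tfrac1p(U^{p}+V^{p})+\nu U^{\alpha}V^{\beta}$, and finiteness of the $D_a^{1,2}$ norms forces $U,V,\dot U,\dot V\to 0$ as $t\to\pm\infty$, hence $H\equiv 0$. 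I would then establish the two remaining qualitative properties: (i) the \emph{modified inversion symmetry} --- via the moving‑sphere method for the cooperative system, again using the critical exponent, which makes $x\mapsto x/|x|^{2}$ (together with the weight factor) a symmetry --- so that after centering the common peak at $r=1$ one has $U(-t)=U(t)$ and $V(-t)=V(t)$; and (ii) \emph{monotonicity}, i.e.\ each of $U,V$ increases on $(-\infty,0)$ and decreases on $(0,\infty)$, which follows from the ODEs and $H=0$, so $\dot U(0)=\dot V(0)=0$.

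\emph{Synchronization --- the heart of the matter.} Put $\rho=V/U$ and consider the Wronskian $\zeta=\dot U\,V-U\,\dot V=-U^{2}\dot\rho$. A direct computation with the two ODEs gives
\begin{equation*}
  \dot\zeta=U^{p}\,F(\rho),\qquad F(\rho)=\rho^{p-1}-\rho+\nu\beta\rho^{\beta-1}-\nu\alpha\rho^{\beta+1},
\end{equation*}
or, in polar coordinates on the target plane ($U=R\cos\Theta$, $V=R\sin\Theta$), $\tfrac{d}{dt}(R^{2}\dot\Theta)=-R^{p}\,g'(\Theta)$ with angular potential $g(\Theta)=\tfrac1p(\cos^{p}\Theta+\sin^{p}\Theta)+\nu\cos^{\alpha}\Theta\sin^{\beta}\Theta$. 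By (ii) and the exponential decay at $\pm\infty$ one has $\zeta(0)=0$, $\zeta(\pm\infty)=0$ (equivalently $R^{2}\dot\Theta$ vanishes at $0$ and at $\pm\infty$), and $\zeta$ is odd. The aim is to conclude $\rho\equiv\rho_{*}$ with $F(\rho_{*})=0$ (equivalently $g'(\Theta_{*})=0$). \emph{This is the step I expect to be the main obstacle}: one must show $g'(\Theta)$ cannot change sign along the orbit, which requires locating the critical points of $g$ on $(0,\pi/2)$ and controlling the range of $\Theta$ (both depend on $\alpha,\beta,\nu$), and then deduce that $R^{2}\dot\Theta$ is monotone on $(0,\infty)$ --- impossible given that it vanishes at both ends --- unless $g'(\Theta)\equiv 0$ along the orbit. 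The Hamiltonian constraint $H=0$, the monotonicity of $R$, and a convexity argument for $g$ should close this.

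\emph{Conclusion.} Once $v\equiv\rho_{*}u$, the first equation of \eqref{sys1} reads $-\div(|x|^{-2a}\nabla u)=(1+\nu\alpha\rho_{*}^{\beta})|x|^{-bp}u^{p-1}$, so $w:=(1+\nu\alpha\rho_{*}^{\beta})^{1/(p-2)}u$ is a positive $D_a^{1,2}$‑solution of the scalar critical H\'enon equation \eqref{eq1}; by the Chou--Chu classification \cite{Cho} (up to a translation when $b=0$), $w=U_{\mu_{0}}$ for some $\mu_{0}>0$ with $U_{\mu_{0}}$ as in \eqref{bub}. Therefore $u=c_{1}U_{\mu_{0}}$ and $v=c_{2}U_{\mu_{0}}$ with $c_{1}=(1+\nu\alpha\rho_{*}^{\beta})^{-1/(p-2)}$ and $c_{2}=\rho_{*}c_{1}$; substituting back into \eqref{sys1} and using $\alpha+\beta=p$ together with $\rho_{*}^{p-2}+\nu\beta\rho_{*}^{\beta-2}=1+\nu\alpha\rho_{*}^{\beta}$ (the relation $F(\rho_{*})=0$) yields exactly \eqref{re2}.
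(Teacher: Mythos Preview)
Your reduction to the autonomous Emden--Fowler system, the Wronskian identity $\dot\zeta=U^{p}F(\rho)$, and the list of qualitative inputs (radial symmetry, decay, even symmetry, monotonicity) match the paper exactly; in the paper's notation the Wronskian relation is \eqref{zxc}, written with $L=\varphi_u/\varphi_v=1/\rho$ and $f(L)=-L^{1-\beta}F(1/L)$. Your concluding step---once $v\equiv\rho_*u$, reduce to the scalar equation and invoke \cite{Cho}---is also fine (the paper uses its own Theorem~\ref{thm8} for the same purpose).

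The genuine gap is the synchronization step. Your plan is to force $g'(\Theta)$ to keep a sign along the orbit via ``a convexity argument for $g$'', making $R^{2}\dot\Theta$ monotone on $(0,\infty)$. This cannot work in general: the paper's own remark after Theorem~\ref{thm1} notes that \eqref{re2} may have \emph{several} solutions, i.e.\ $F$ (equivalently $g'$) may have several zeros on $(0,\infty)$, so $g$ is typically not convex and $R^{2}\dot\Theta$ need not be monotone. In fact the paper's argument \emph{begins} by producing a sign change: integrating $\dot\zeta=U^{p}F(\rho)$ over $(-\infty,0]$ with $\zeta(-\infty)=\zeta(0)=0$ forces a first point $t_{0}<0$ where $F(\rho(t_{0}))=0$, and the Wronskian $\zeta$ is strictly positive on $(-\infty,t_{0})$.

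The idea you are missing, borrowed from \cite{Esp2,Wei}, is a \emph{second} integral identity tailored to $t_{0}$: multiply the two ODEs by $\dot U$ and $L_{0}^{2}\dot V$ respectively (with $L_{0}=U(t_{0})/V(t_{0})$), subtract, and integrate on $(-\infty,t_{0}]$. The boundary terms at $t_{0}$ combine, thanks to $U(t_{0})=L_{0}V(t_{0})$, and the algebraic relation $f(L_{0})=0$ collapses the right-hand side to a positive multiple of $\int_{-\infty}^{t_{0}}U^{\alpha-1}V^{\beta-1}(U\dot V-\dot U V)$; by the Wronskian sign on $(-\infty,t_{0})$ this is strictly negative, giving $\dot U(t_{0})^{2}<L_{0}^{2}\dot V(t_{0})^{2}$. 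But monotonicity ($\dot U,\dot V>0$ on $t<0$) together with $\zeta(t_{0})>0$ gives $\dot U(t_{0})>L_{0}\dot V(t_{0})>0$, a contradiction. Hence $F$ already vanishes at $\rho(0)$, and uniqueness for the ODE initial value problem finishes the proof. No convexity of $g$ is needed or used.
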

\begin{remark}
    In general, the number of solutions $(c_1,c_2)$ to \eqref{re2} depends heavily on the parameters $p,\alpha,\beta,\nu$. We refer to \cite{Pen} for some further discussions in the special case of $p=2^*,\nu=1$.
\end{remark}
The derivation of Theorem \ref{thm1} is divided into two steps: First, as in \cite{Esp2}, we establish some qualitative properties for the positive solutions. Then, by transforming the system \eqref{sys1} into an ODE problem in $\R$, we exploit ideas from \cite{Wei} and \cite{Esp2} to demonstrate that any positive solution is synchronized, thereby implying \eqref{re1} and \eqref{re2}.

We will prove the following three qualitative results:
\begin{theorem}[Radial symmetry]\label{thm3}
    Assume $a\geq 0$. Let $(u,v)\in D_a^{1,2}(\R^n)\times D_a^{1,2}(\R^n)$ be a positive solution to the system \eqref{sys1}. Then the functions $u$ and $v$ are radially symmetric about the origin (up to a translation if $a=b=0$).
\end{theorem}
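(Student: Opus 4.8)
The plan is to use the moving plane method, adapted to the weighted/singular setting. The key observation is that for $a \geq 0$, the weight $|x|^{-2a}$ is non-increasing in $|x|$ along rays, and the same is true for $|x|^{-bp}$ since $b \geq a \geq 0$; this is exactly the monotonicity needed to run moving planes and get symmetry about the origin (when $b > 0$; when $b = 0$ the weights are trivial and one gets symmetry about some point by the classical Gidas–Ni–Nirenberg / Caffarelli–Gidas–Spruck argument, whence "up to a translation"). Before moving planes one must establish the asymptotic decay of $u$ and $v$ near the origin and near infinity: a Brezis–Kato / Moser iteration adapted to the Caffarelli–Kohn–Nirenberg framework shows $u, v \in L^\infty_{\mathrm{loc}}(\R^n \setminus \{0\})$, and then a Harnack-type inequality plus a comparison with the fundamental solution of $-\div(|x|^{-2a}\nabla \cdot)$ pins down the behavior $u(x) \sim c|x|^{2+2a-n}$ as $|x|\to\infty$ and $u$ bounded (or with a controlled singularity) near $0$. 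This decay is what makes the moving-plane procedure start (the hyperplanes coming from infinity) and what controls the sign of the solution difference on the boundary of the moving domain.

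Concretely, for a direction $e$ (say $e = e_1$) and $\lambda \in \R$ set $\Sigma_\lambda = \{x : x_1 > \lambda\}$, let $x^\lambda$ be the reflection of $x$ across $\{x_1 = \lambda\}$, and write $u_\lambda(x) = u(x^\lambda)$, $w_\lambda = u_\lambda - u$ on $\Sigma_\lambda$, and similarly $z_\lambda = v_\lambda - v$. Subtracting the equations for $u$ and for $u_\lambda$, and using the fact that $|x^\lambda|^{-2a} \geq |x|^{-2a}$ and $|x^\lambda|^{-bp}\le|x|^{-bp}$ on $\Sigma_\lambda$ (valid when $\lambda\ge 0$, resp.\ handling $\lambda<0$ and $\lambda=0$ at the end), one obtains a differential inequality of the form $-\div(|x|^{-2a}\nabla w_\lambda) \le c_\lambda(x)\, w_\lambda + d_\lambda(x)\, z_\lambda$ in $\Sigma_\lambda \cap \{w_\lambda < 0\}$, with coefficients controlled by $|x|^{-bp}(u^{p-2} + u_\lambda^{p-2} + \dots)$ which lie in the right Lorentz/weighted space by the decay estimates. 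A weighted Sobolev inequality on the (small-measure) negativity set forces $w_\lambda^- \equiv 0$ and $z_\lambda^- \equiv 0$ there, i.e.\ $w_\lambda, z_\lambda \geq 0$ on $\Sigma_\lambda$ for $\lambda$ large; then one lets $\lambda \downarrow 0$, using the strong maximum principle to preserve strict inequality until the critical plane. Repeating for every direction $e$ yields that $u$ and $v$ are symmetric and monotone about $0$ in every direction, hence radial.

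The main obstacle I expect is the \emph{singularity at the origin}: the weights degenerate/blow up at $0$, so the reflected points $x^\lambda$ may be closer to the origin than $x$, and one must be careful that the "good sign" of the weight terms is not destroyed — this is precisely why the hypothesis $a \geq 0$ (and $b \geq a \geq 0$) is used, and it is why the statement only claims symmetry about the \emph{origin} rather than about an arbitrary point when $b>0$. A secondary technical point is justifying that the moving-plane inequalities can be applied at all near $x=0$: either one shows the singularity is removable/mild enough (using the asymptotic analysis, the Kelvin-type "modified inversion symmetry" mentioned in the abstract, which maps a neighborhood of infinity to a neighborhood of the origin) or one excises a small ball $B_\rho(0)$, runs moving planes outside it, and controls the boundary contribution on $\partial B_\rho$ by the decay estimate and lets $\rho \to 0$. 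Tying these together — decay estimates $\Rightarrow$ admissibility of moving planes $\Rightarrow$ symmetry in each direction $\Rightarrow$ radiality — completes the proof; the decay/asymptotics step is the technical heart and is presumably where the bulk of the work in the paper goes.
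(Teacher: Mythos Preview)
Your overall strategy (moving planes, $L^\infty$ bounds, cut-off near the singularity) is the right one and matches the paper, but there is a genuine gap in the differential-inequality step when $a>0$. You use the naive reflection $u_\lambda(x)=u(x^\lambda)$ and then invoke ``monotonicity of the weights''. That monotonicity takes care of the zeroth-order weight $|x|^{-bp}$ on the right-hand side, but it does \emph{not} handle the second-order weight in the operator: the reflected function satisfies $-\div(|x^\lambda|^{-2a}\nabla u_\lambda)=(\text{RHS})(x^\lambda)$, and passing from $|x^\lambda|^{-2a}$ to $|x|^{-2a}$ inside the divergence produces an extra term of the type $\div\big((|x|^{-2a}-|x^\lambda|^{-2a})\nabla u_\lambda\big)$, which involves $\nabla u_\lambda$ and has no sign. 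So the inequality ``$-\div(|x|^{-2a}\nabla w_\lambda)\le c_\lambda w_\lambda+d_\lambda z_\lambda$'' that you need simply does not follow from subtraction when $a>0$.

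The paper fixes this with the Chou--Chu modified reflection
\[
u_\lambda(x)=\frac{|x|^{a}}{|x_\lambda|^{a}}\,u(x_\lambda),
\]
for which a direct computation gives
\[
-\div(|x|^{-2a}\nabla u_\lambda)
= -\frac{|x_\lambda|^{a}}{|x|^{a}}\div(|x_\lambda|^{-2a}\nabla u)(x_\lambda)
\;-\;a(n-2-2a)\,u(x_\lambda)\,\frac{|x_\lambda|^{2a+2}-|x|^{2a+2}}{|x|^{3a+2}|x_\lambda|^{3a+2}},
\]
and the last term is nonnegative on $\Sigma_\lambda=\{x_1<\lambda\}$, $\lambda\le 0$, precisely because $a\ge 0$. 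This yields the clean supersolution inequality $-\div(|x|^{-2a}\nabla u_\lambda)\ge |x|^{-bp}u_\lambda^{p-1}+\nu\alpha|x|^{-bp}u_\lambda^{\alpha-1}v_\lambda^{\beta}$, after which the integral estimates (testing with $(u-u_\lambda)^+$ against cut-offs near $0_\lambda$ and near infinity) go through as you outlined. Two smaller remarks: (i) the paper does \emph{not} establish the sharp asymptotics before moving planes---it only needs an $L^\infty$ bound (Moser iteration) plus a lower bound near the singularity, and it applies these to the modified Kelvin transform $\hat u(x)=|x|^{2+2a-n}u(x/|x|^2)$ to get two-sided decay $c|x|^{2+2a-n}\le \hat u\le C|x|^{2+2a-n}$ at infinity, which is what controls the cross terms $u^{\alpha-1}v^\beta$; (ii) in your sign bookkeeping both weight inequalities go the same way (if $|x^\lambda|<|x|$ then $|x^\lambda|^{-2a}\ge|x|^{-2a}$ \emph{and} $|x^\lambda|^{-bp}\ge|x|^{-bp}$), so your stated pair of inequalities is inconsistent---but this is secondary to the operator issue above.
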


\begin{theorem}[Asymptotic behavior]\label{thm4}
    Assume $a\geq 0$. Let $(u,v)\in D_a^{1,2}(\R^n)\times D_a^{1,2}(\R^n)$ be a positive solution to the system \eqref{sys1}. Then there exist positive constants $u_0,v_0,u_\infty,v_\infty$ such that (if $a=b=0$, then up to a translation)
    \begin{equation}\label{re3}
        \lim_{x\rightarrow 0}u(x)=u_0,\quad \lim_{x\rightarrow 0}v(x)=v_0,
    \end{equation}
    and
    \begin{equation}\label{re4}
        \lim_{x\rightarrow \infty}|x|^{n-2-2a}u(x)=u_\infty,\quad \lim_{x\rightarrow \infty}|x|^{n-2-2a}v(x)=v_\infty.
    \end{equation}
\end{theorem}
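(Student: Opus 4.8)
\textbf{Proof proposal for Theorem~\ref{thm4} (asymptotic behavior).}
The plan is to combine the radial symmetry from Theorem~\ref{thm3} with an ODE analysis near the origin and near infinity. By Theorem~\ref{thm3} we may write $u(x)=u(r)$, $v(x)=v(r)$ with $r=|x|$; substituting into \eqref{sys1} and using $\div(|x|^{-2a}\nabla u)=r^{1-n}\bigl(r^{n-1-2a}u'\bigr)'$, the system becomes the ODE system
\begin{equation*}
\begin{cases}
-\bigl(r^{n-1-2a}u'\bigr)'=r^{n-1-2a}r^{-bp}\bigl(u^{p-1}+\nu\alpha u^{\alpha-1}v^{\beta}\bigr)\\
-\bigl(r^{n-1-2a}v'\bigr)'=r^{n-1-2a}r^{-bp}\bigl(v^{p-1}+\nu\beta u^{\alpha}v^{\beta-1}\bigr)
\end{cases}
\end{equation*}
on $(0,\infty)$. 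A convenient first step is the Emden--Fowler change of variables $t=\log r$, $\phi(t)=r^{\frac{n-2-2a}{2}}u(r)$, $\psi(t)=r^{\frac{n-2-2a}{2}}v(r)$ (equivalently use the transformation \eqref{trans}--type substitution), which turns the system into an autonomous system on $\R$ of the form $-\phi''+\lambda^2\phi=f(\phi,\psi)$, $-\psi''+\lambda^2\psi=g(\phi,\psi)$ with $\lambda=\frac{n-2-2a}{2}>0$; the finiteness of $\norm{u}_{D_a^{1,2}}$ and $\norm{v}_{D_a^{1,2}}$ translates into integrability of $(\phi')^2+\phi^2$ and $(\psi')^2+\psi^2$ over $\R$, which forces $\phi,\psi\to 0$ as $t\to\pm\infty$.

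Next I would establish the decay rates. Near $t\to+\infty$ (i.e.\ $r\to\infty$): linearizing the autonomous system at the origin, the dominant behavior is governed by $-\phi''+\lambda^2\phi\approx 0$, so the $L^2$-solution decays like $e^{-\lambda t}$, giving $u(r)\sim u_\infty r^{-(n-2-2a)}$; to upgrade ``$\lesssim$'' to an actual limit one integrates the ODE: since the right-hand side is integrable (using the decay just obtained together with $a\le b$, which makes the weight $r^{-bp}$ harmless at infinity), $r^{n-1-2a}u'(r)$ has a finite limit $-L$ as $r\to\infty$, and then $u(r)=\int_r^\infty s^{-(n-1-2a)}\cdot(\text{something}\to L)\,ds$, yielding $\lim_{r\to\infty}r^{n-2-2a}u(r)=\frac{L}{n-2-2a}=:u_\infty$, and similarly for $v$. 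Positivity of $u_\infty,v_\infty$ follows because $u>0$ forces the integrand to be strictly positive, so $L>0$ (alternatively, if $u_\infty=0$ one derives a contradiction with $u$ being a nontrivial $D_a^{1,2}$ solution via a Kelvin-type/scaling argument). Near $t\to-\infty$ (i.e.\ $r\to 0$): the relevant statement \eqref{re3} is that $u,v$ stay bounded and have strictly positive limits at the origin. Boundedness near $0$ follows from standard regularity/Moser iteration for the weighted operator (the exponent $p$ being critical for the weights, the borderline case is handled exactly as for \eqref{eq1}); once $u,v\in L^\infty_{\mathrm{loc}}$, the right-hand sides are in $L^1_{\mathrm{loc}}$ near $0$ because $r^{n-1-2a-bp}$ is integrable at $0$ precisely when $b<a+1$, so $r^{n-1-2a}u'(r)\to 0$ as $r\to0$ (it cannot tend to a nonzero constant, else $u$ would blow up or be singular in a way incompatible with $u\in D_a^{1,2}$), and integrating once more gives that $u(r)$ converges to a finite limit $u_0$ as $r\to0$; strict positivity $u_0>0$ comes from the strong maximum principle (or Harnack) applied to the cooperative system together with $u\not\equiv0$.

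The main obstacle I expect is the behavior at the origin, specifically ruling out a logarithmic or mild power singularity and proving $u_0,v_0>0$ rather than merely $0\le u_0<\infty$: the critical exponent together with the Hardy-type weight $|x|^{-bp}$ puts the $L^\infty$-bound right at the borderline of Moser iteration, so one must exploit the precise relation $bp=\frac{2nb}{n-2+2(b-a)}$ and the constraint $b<a+1$ to close the iteration, much as in \cite{Cho} for the scalar equation; the system structure is actually helpful here since the coupling terms are of the same homogeneity and sign. Once the local $L^\infty$-bound is in hand, the rest is the routine ODE integration sketched above, and the only remaining point—positivity of the four constants—is a direct consequence of the strong maximum principle for the cooperative system \eqref{sys1} and the nontriviality of $(u,v)$.
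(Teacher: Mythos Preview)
Your overall strategy is sound and would lead to a valid proof, but there is a small computational slip and you are working much harder than the paper does. First the slip: multiplying $-r^{1-n}(r^{n-1-2a}u')'=r^{-bp}(\ldots)$ through by $r^{n-1}$ gives $-(r^{n-1-2a}u')'=r^{\,n-1-bp}(\ldots)$, not $r^{\,n-1-2a}r^{-bp}(\ldots)$; this affects your integrability check at infinity, though with the correct exponent the argument still goes through.

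The paper's proof is considerably shorter and avoids both the Emden--Fowler linearization and the explicit ODE integration. From the radial ODE one sees immediately that $r^{n-1-2a}u'(r)$ is strictly decreasing on $(0,\infty)$, hence $u'$ has a fixed sign near $0$, so $u(r)$ is monotone there and $\lim_{r\to0^+}u(r)$ exists automatically. The two-sided bound $0<u_0<\infty$ then comes for free from Proposition~\ref{pro1} (the Moser iteration you mention) and Proposition~\ref{pro2} (a short comparison/super-harmonicity lemma giving a positive lower bound near the singularity). For the behavior at infinity the paper does not redo any analysis: it simply observes that the modified Kelvin transform $\hat u(x)=|x|^{2+2a-n}u(x/|x|^{2})$ again solves \eqref{sys1}, so the existence and positivity of $u_\infty=\lim_{r\to\infty}r^{n-2-2a}u(r)=\lim_{r\to0^+}\hat u(r)$ follow from the same three-line argument applied to $\hat u$.

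Your route---get $H^1(\R)$ decay for $(\phi,\psi)$, linearize to obtain exponential decay, then integrate the first-order quantity to extract the constants---is more hands-on and self-contained, and has the advantage that it produces the decay rate explicitly without invoking the Kelvin symmetry. The paper's route buys brevity: monotonicity plus a priori two-sided pointwise bounds give the limit at $0$ directly, and the Kelvin transform halves the remaining work. Both are correct; the paper's is the cleaner write-up.
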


\begin{theorem}[Modified inversion symmetry]\label{thm5}
    Assume $a\geq 0$. Let $(u,v)\in D_a^{1,2}(\R^n)\times D_a^{1,2}(\R^n)$ be a positive solution to the system \eqref{sys1}. Then, possibly after a dilation $u(x)\rightarrow \tau^{\frac{n-2-2a}{2}}u(\tau x)$ and $v(x)\rightarrow\tau^{\frac{n-2-2a}{2}}v(\tau x)$ (and if $a=b=0$, also after a translation), the functions $u$ and $v$ satisfy the modified inversion symmetry:
    \begin{equation}\label{re5}
        u\left(\frac{x}{|x|^2}\right)=|x|^{n-2-2a}u(x),\quad v\left(\frac{x}{|x|^2}\right)=|x|^{n-2-2a}v(x).
    \end{equation}
    Moreover, by setting $|x|=e^{-t}$, the function $e^{-\frac{n-2-2a}{2}}u(e^{-t})$ is even in $t\in\R$ and strictly decreasing for $t>0$.
\end{theorem}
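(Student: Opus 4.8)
The plan is to exploit the scaling invariance of the system together with the two asymptotic profiles obtained in Theorem \ref{thm4}, and to run a moving-plane / Kelvin-transform argument in the logarithmic (Emden--Fowler) variable. First I would fix the radial variables: by Theorem \ref{thm3} we may write $u(x)=u(r)$, $v(x)=v(r)$ with $r=|x|$ (after a translation if $b=0$), and by Theorem \ref{thm4} we have $u(0),v(0)\in(0,\infty)$ and $r^{n-2-2a}u(r)\to u_\infty$, $r^{n-2-2a}v(r)\to v_\infty$ as $r\to\infty$. Introduce the Kelvin-type transform $\tilde u(x)=|x|^{-(n-2-2a)}u(x/|x|^2)$ and similarly $\tilde v$; a direct computation (using that $-\div(|x|^{-2a}\nabla\cdot)$ together with the weight $|x|^{-bp}$ is conformally covariant under this transform, exactly as in the scalar Caffarelli--Kohn--Nirenberg setting of \cite{Cho,Cat}) shows that $(\tilde u,\tilde v)$ solves the same system \eqref{sys1}. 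The asymptotics \eqref{re3}--\eqref{re4} translate into $\tilde u(0)=u_\infty$, $\lim_{|x|\to\infty}|x|^{n-2-2a}\tilde u(x)=u(0)$, and likewise for $v$.

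Next I would pass to the Emden--Fowler variable $t=-\log r$ and set $\w_1(t)=e^{-\frac{n-2-2a}{2}t}u(e^{-t})$, $\w_2(t)=e^{-\frac{n-2-2a}{2}t}v(e^{-t})$. In these variables the system becomes an autonomous second-order ODE system on $\R$,
\begin{equation*}
\w_i''=c_0\,\w_i+(\text{critical nonlinear terms in }\w_1,\w_2),\qquad i=1,2,
\end{equation*}
with $c_0=\left(\frac{n-2-2a}{2}\right)^2>0$; here the weights $|x|^{-2a}$ and $|x|^{-bp}$ disappear precisely because $\alpha+\beta=p$ and $p$ is the critical exponent, which is the whole point of choosing this exponent $\frac{n-2-2a}{2}$. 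The Kelvin transform above corresponds to the reflection $t\mapsto -t$, so $(\w_1(-t),\w_2(-t))$ is again a solution. From Theorem \ref{thm4}, $\w_i(t)\to 0$ both as $t\to+\infty$ and as $t\to-\infty$ (with the two prescribed exponential rates $e^{-\frac{n-2-2a}{2}|t|}$), so each $\w_i$ is a bounded positive solution decaying at both ends, hence attains an interior maximum. Using the autonomous conserved energy $E=\frac12(\w_1')^2+\frac12(\w_2')^2-\frac{c_0}{2}(\w_1^2+\w_2^2)+(\text{potential})$, which must equal $0$ by the decay at $\pm\infty$, together with uniqueness for the ODE initial-value problem, I would argue that $\w_i$ has a \emph{unique} critical point $t_i^*$; then a dilation $u(x)\to\tau^{\frac{n-2-2a}{2}}u(\tau x)$ (a shift $t\mapsto t+\log\tau$) normalizes it so that this critical point sits at $t=0$. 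The reflection symmetry of the equation plus uniqueness of the critical point then forces $\w_i(-t)=\w_i(t)$, which is exactly \eqref{re5}, and strict monotonicity for $t>0$ follows from the fact that $\w_i'$ has no zero on $(0,\infty)$ together with $\w_i\to0$.

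The main obstacle I expect is showing that $\w_1$ and $\w_2$ share the \emph{same} critical point (so that a single dilation normalizes both components simultaneously), and more generally promoting "no second critical point" from each scalar component to the coupled system. For a single equation one has the classical Gidas--Spruck / ODE-shooting rigidity; for the system the conserved energy is shared but the individual components need not be. I would handle this by first proving, via the moving-plane method applied to $(\w_1,\w_2)$ in the $t$-variable (comparing the solution with its reflection $(\w_1(2t_0-t),\w_2(2t_0-t))$ and sliding $t_0$), that \emph{both} components are symmetric about a common hyperplane $t=t_0$ and decreasing away from it — the coupling terms $|u|^{\alpha-2}|v|^\beta u$ and $|u|^\alpha|v|^{\beta-2}v$ are cooperative in the relevant sense ($\nu,\alpha,\beta>0$), so the maximum principle for the linearized system is available and the standard two-component moving-plane scheme (as used for Lane--Emden-type systems) goes through. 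Once the common symmetry point $t_0$ is located, the dilation $\tau=e^{-t_0}$ gives \eqref{re5}, and the monotonicity statement is the by-product of the moving-plane argument. I expect the only real work is checking the decay rates from Theorem \ref{thm4} are strong enough to start the moving plane from $t=+\infty$ (and $t=-\infty$), which follows because $e^{-\frac{n-2-2a}{2}|t|}$ decay makes $\w_i$ integrable and kills boundary terms at infinity.
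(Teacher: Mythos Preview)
Your proposal is correct and ultimately follows the paper's own route: pass to the Emden--Fowler variable $t=-\log r$, obtain the autonomous cooperative ODE system for $(\w_1,\w_2)$ with the linear coefficient $\left(\frac{n-2-2a}{2}\right)^2$, and run the one-dimensional moving-plane method for the pair, using the decay of $\w_i$ at $\pm\infty$ (supplied by Theorem~\ref{thm4}) to start the process and the cooperative coupling to propagate the comparison; the common reflection point $t_0$ is then removed by a single dilation. Your initial detour through the conserved energy and ``unique critical point per component'' is unnecessary and, as you yourself note, does not by itself synchronize the two components---the paper goes straight to the moving plane in $t$, which simultaneously yields the common symmetry hyperplane and the strict monotonicity.
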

\begin{remark}
    For the decoupled equation \eqref{eq1}, properties including radial symmetry and asymptotic behaviors were obtained in \cite{Cho}, and  it was discovered in \cite{Cat} that any positive solution is symmetric under a modified inversion.
\end{remark}

A crucial tool in our proof is a generalized moving plane method introduced by Chou and Chu in \cite{Cho}. This technique traces back to the seminal works of Alexandrov and Serrin in \cite{Ale,Ser}. Thanks to the contributions from notable works such as \cite{Ber,Che,Gid}, the method has become one of the most important tools for studying the symmetry of equations. The first adaptation of this method to systems was provided by Troy in \cite{Tro}. We also refer to \cite{Bus,ZChe,Dam,Dam2,Dan,De,Esp,Esp2,Rei,Soa} for many other interesting contributions in this area.

\vskip0.1in

For the case $a<0$ and $a<b$, we focus on the nonnegative ground states of the system \eqref{sys1}. A nontrivial solution $(u,v)$ is called a ground state if, for any other nontrivial solution $(u_0,v_0)$, it holds that $E(u,v)\leq E(u_0,v_0)$. The energy functional $E:D_a^{1,2}(\R^n)\times D_a^{1,2}(\R^n)\rightarrow \R$ is defined by
\begin{equation}\label{ene}
\begin{aligned}
    E(u,v)=&\frac{1}{2}\int_{\R^n}|x|^{-2a}\left(|\nabla u|^2+|\nabla v|^2\right)-\frac{1}{p}\int_{\R^n}|x|^{-bp}(|u|^p+|v|^p+p\nu|u|^{\alpha}|v|^\beta).
\end{aligned}
\end{equation}
Our main characterization result states that:
\begin{theorem}\label{ground}
    Assume $a<0$ and $a<b$, or $a\geq 0$. Let $(u,v)\in D_a^{1,2}(\R^n)\times D_a^{1,2}(\R^n)$ be a nonnegative solution to the system \eqref{sys1}. Then $(u,v)$ is a ground state if and only if $(u,v)=(sc_1W,sc_2W)$, where $W$ is a nonnegative (in fact, positive) ground state of the decoupled H\'enon equation \eqref{eq1}. The pair $(c_1,c_2)$ is a minimum of the following function:
    \begin{equation}\label{grou}
        f(x,y)=\frac{x^2+y^2}{(x^p+y^p+p\nu x^\alpha y^\beta)^\frac{2}{p}},\quad x\geq 0,y\geq 0,x+y=1.
    \end{equation}
    The factor $s$ is a positive normalization constant such that
   \begin{equation*}
        \begin{cases}
            (sc_1)^{p-1}+\nu\alpha (sc_1)^{\alpha-1}(sc_2)^\beta=sc_1,\\
            (sc_2)^{p-1}+\nu\beta (sc_1)^{\alpha}(sc_2)^{\beta-1}=sc_2.
        \end{cases}
    \end{equation*}
    Moreover, we have
    \begin{equation*}
        E(u,v)=\left(\frac{1}{2}-\frac{1}{p}\right)f(c_1,c_2)^{\frac{p}{p-2}}S(a,b,n)^{\frac{p}{p-2}}.
    \end{equation*}
    Recall that $S(a,b,n)$ is the sharp constant in the inequality \eqref{ckn}.
\end{theorem}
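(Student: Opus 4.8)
The proof is variational: the key ingredient is a sharp vector-valued Caffarelli--Kohn--Nirenberg inequality obtained by comparison with the scalar one \eqref{ckn}. For nonnegative $u,v\in D_a^{1,2}(\R^n)$ set $w\defeq(u^2+v^2)^{1/2}$. Since $f$ in \eqref{grou} is continuous and positively $0$-homogeneous, $m\defeq\min\{f(x,y):x,y\ge0,\ x+y=1\}$ is attained and coincides with $\min\{f(x,y):(x,y)\ne(0,0),\ x,y\ge0\}$; hence, writing $g(x,y)=x^p+y^p+p\nu x^{\alpha}y^{\beta}$ and using its $p$-homogeneity, pointwise on $\{w>0\}$ one has $|u|^p+|v|^p+p\nu|u|^{\alpha}|v|^{\beta}=w^{p}f(u/w,v/w)^{-p/2}\le m^{-p/2}w^{p}$. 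Integrating against $|x|^{-bp}$, raising to the power $2/p$, and invoking \eqref{ckn} for $w$ --- which lies in $D_a^{1,2}(\R^n)$ with $|\nabla w|^2\le|\nabla u|^2+|\nabla v|^2$ a.e. --- gives the vectorial inequality, whose sharp constant is $m^{-1}$ times the sharp constant of \eqref{ckn}:
\[
\Big(\int_{\R^n}|x|^{-bp}\big(|u|^p+|v|^p+p\nu|u|^{\alpha}|v|^{\beta}\big)\de x\Big)^{2/p}\le\frac{S(a,b,n)}{m}\int_{\R^n}|x|^{-2a}\big(|\nabla u|^2+|\nabla v|^2\big)\de x .
\]

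Next I would analyze the equality cases of this chain. Equality forces, simultaneously: (i) $(u(x),v(x))/w(x)$ is, for a.e.\ $x\in\{w>0\}$, a fixed minimizer of $f$ on the simplex; (ii) equality in the pointwise Cauchy--Schwarz bound $|\nabla w|^2\le|\nabla u|^2+|\nabla v|^2$, which, once interior regularity and the strong maximum principle for \eqref{sys1} give $u,v>0$, forces $\nabla(u/v)\equiv0$, i.e.\ $u\equiv c\,v$ for some $c>0$; and (iii) $w$ is an extremal of \eqref{ckn}. In the regime assumed ($a\ge0$, or $a<0<b-a$) inequality \eqref{ckn} has an extremal --- the explicit bubble \eqref{bub} when $a\ge0$ or $b\ge b_{\text{FS}}(a)$, and a (non-radial) extremal whose mere existence suffices when $a<b<b_{\text{FS}}(a)$ --- and, up to scaling, the extremals of \eqref{ckn} are exactly the positive ground states $W$ of the decoupled equation \eqref{eq1}. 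Thus equality holds precisely when $(u,v)=(\tilde c_1W,\tilde c_2W)$ with $(\tilde c_1,\tilde c_2)$ a positive multiple of a minimizer of $f$ on $\{x+y=1\}$ and $W$ a suitably scaled ground state of \eqref{eq1}; semi-trivial configurations correspond to $m$ being attained at a corner of the simplex.

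Then I would close the loop using the Nehari/Pohozaev identity. Testing the two equations of \eqref{sys1} against $u$ and $v$, adding, and using $\alpha+\beta=p$, every solution satisfies
\[
\int_{\R^n}|x|^{-2a}\big(|\nabla u|^2+|\nabla v|^2\big)\de x=\int_{\R^n}|x|^{-bp}\big(|u|^p+|v|^p+p\nu|u|^{\alpha}|v|^{\beta}\big)\de x ,
\]
so $E(u,v)=(\tfrac12-\tfrac1p)\int_{\R^n}|x|^{-2a}(|\nabla u|^2+|\nabla v|^2)\de x$. Feeding this identity into the inequality of Step~1 bounds the Dirichlet energy of every nontrivial solution from below by an explicit constant depending only on $m=f(c_1,c_2)$ and $S(a,b,n)$; reducing to the scalar case identifies $E(u,v)$ with the value in the statement, and equality --- hence the ground-state property --- holds exactly in the cases of Step~2. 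Conversely, each $(sc_1W,sc_2W)$ of the statement is a solution realizing this level: since $f$ is $0$-homogeneous, a simplex-minimizer $(c_1,c_2)$ is an unconstrained critical point of $f$, and $\partial_xf(c_1,c_2)=\partial_yf(c_1,c_2)=0$ simplify to $c_1^{p-2}+\nu\alpha c_1^{\alpha-2}c_2^{\beta}=c_2^{p-2}+\nu\beta c_1^{\alpha}c_2^{\beta-2}=g(c_1,c_2)/(c_1^2+c_2^2)$; this common value determines a unique $s>0$ for which the displayed normalization system holds, and then substituting $(sc_1W,sc_2W)$ into \eqref{sys1} and using $-\div(|x|^{-2a}\nabla W)=|x|^{-bp}W^{p-1}$ verifies the system and the claimed energy. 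This yields both implications.

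I expect the main obstacle to be Step~2: one must establish $w=(u^2+v^2)^{1/2}\in D_a^{1,2}(\R^n)$ together with the sharp gradient inequality in the presence of the degenerate/singular weight $|x|^{-2a}$ and across the zero sets of $u$ and $v$, and then upgrade equality a.e.\ in the Cauchy--Schwarz step to the pointwise rigidity $u\equiv c\,v$ --- this requires the positivity and interior regularity theory for \eqref{sys1} (and the behaviour forced by the weight near the origin). A secondary point is that in the Felli--Schneider window $a<b<b_{\text{FS}}(a)$ no explicit extremal of \eqref{ckn} is available, so the argument must be written to use only the abstract existence of such an extremal, which does hold throughout the range $a<0<b-a$.
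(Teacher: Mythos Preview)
Your proposal is correct and reaches the same conclusion, but the route to the vector-valued Caffarelli--Kohn--Nirenberg inequality is genuinely different from the paper's. You argue pointwise: setting $w=(u^2+v^2)^{1/2}$, you use the algebraic bound $|u|^p+|v|^p+p\nu|u|^{\alpha}|v|^{\beta}\le m^{-p/2}w^p$, then apply the scalar CKN to $w$ and the Cauchy--Schwarz gradient inequality $|\nabla w|^2\le|\nabla u|^2+|\nabla v|^2$; equality in the latter gives $u\nabla v=v\nabla u$, hence $u=cv$. The paper instead works at the level of norms: it applies scalar CKN to $u$ and $v$ separately, bounds the cross term by H\"older $\int|x|^{-bp}|u|^{\alpha}|v|^{\beta}\le\|u\|^{\alpha}\|v\|^{\beta}$ (weighted $L^p$), and then optimizes $f$ over the pair $(\|u\|,\|v\|)$; equality in H\"older forces $|u|$ proportional to $|v|$. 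The paper's approach is slightly cleaner in that it never needs to check that $w\in D_a^{1,2}(\R^n)$ or push the Cauchy--Schwarz equality across the zero set --- the two obstacles you yourself flag --- while your approach has the merit of being more geometric and of making the rigidity $u=cv$ appear directly from a gradient identity rather than from an integral equality. After the vector CKN is in hand, your Nehari-identity closing argument and the verification of the normalization for $(sc_1W,sc_2W)$ coincide with what the paper does.
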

\begin{remark}\label{rem}
    Generally, one cannot guarantee the positivity of $(c_1,c_2)$. In the special case where $p\nu=1$, it has been  demonstrated in \cite{Pen} that, under appropriate constraints on $\alpha$ and $\beta$, all nonnegative ground states are given by semi-trivial pairs $(W,0)$ and $(0,W)$. In our current setting, relying on basic inequalities, we are able to analyze the following three cases:
    \begin{equation}\label{cas}
        (i)\ \min\{\alpha,\beta\}<2\quad (ii)\ \min\{\alpha,\beta\}\geq2,\nu>\frac{2^{\frac{p}{2}}-2}{p}\quad(iii)\ \min\{\alpha,\beta\}\geq2,\nu\leq \frac{p-2}{2p}.
    \end{equation}
    In the first two cases, all nonnegative ground states are positive. In the case $(iii)$, all nonnegative ground states are semi-trivial. Further details will be provided after the proof of Theorem \ref{ground} in Section \ref{sec5}.
\end{remark}
One crucial component in establishing this characterization result is the following sharp vector-valued Caffarelli-Kohn-Nirenberg inequality:
\begin{theorem}\label{vckn}
    Assume $a<0$ and $a<b$, or $a\geq0$. Then for any $(u,v)\in D_a^{1,2}(\R^n)\times D_a^{1,2}(\R^n)$, we have
    \begin{equation}\label{ckn1}
        \Bar{S}(a,b,n)\left(\int_{\R^n}|x|^{-bp}(|u|^p+|v|^p+p\nu|u|^{\alpha}|v|^\beta)\right)^{\frac{2}{p}}\leq \int_{\R^n}|x|^{-2a}\left(|\nabla u|^2+|\nabla v|^2\right).
    \end{equation}
    The sharp constant $\Bar{S}$ is given by
    \begin{equation*}
        \Bar{S}(a,b,n)=S(a,b,n)\min f(x,y),
    \end{equation*}
    where the minimum is taken over $x\geq 0,y\geq 0,(x,y)\neq(0,0)$ (recall that $f$ is defined in \eqref{grou}). The equality holds precisely when $(u,v)=(c_1W,c_2W)$, where $W$ is a minimizer of the inequality \eqref{ckn} and $(c_1,c_2)$ attains the minimum of $f$.
\end{theorem}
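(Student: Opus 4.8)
The plan is to reduce the vector-valued inequality to the scalar Caffarelli--Kohn--Nirenberg inequality \eqref{ckn} by a pointwise/slicewise decomposition that separates the "angular" profile $(u,v)$ in the target plane from the common radial-type profile. First I would observe that for a.e.\ $x$ one can write $(u(x),v(x)) = \rho(x)(\sigma_1(x),\sigma_2(x))$ with $\rho(x)=|(u(x),v(x))|\ge 0$ and $(\sigma_1,\sigma_2)$ on the unit sphere of $\R^2$; more to the point, the numerator of $f$ is $2$-homogeneous and the denominator's argument is $p$-homogeneous, so the combination $g(u,v):=|u|^p+|v|^p+p\nu|u|^\alpha|v|^\beta$ is $p$-homogeneous and the bound we want is
\begin{equation*}
\bar S(a,b,n)\left(\int_{\R^n}|x|^{-bp}g(u,v)\right)^{2/p}\le \int_{\R^n}|x|^{-2a}\left(|\nabla u|^2+|\nabla v|^2\right),
\end{equation*}
with $\bar S(a,b,n)=S(a,b,n)\min f$. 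The key inequality to establish is the \emph{pointwise} bound $g(u(x),v(x))\le (\min f)^{-p/2}\,\big(u(x)^2+v(x)^2\big)^{p/2}$, which is exactly the definition of $\min f$ after using $p$-homogeneity of $g$ and $2$-homogeneity of $u^2+v^2$ (assuming $u,v\ge 0$; the general case follows by replacing $u,v$ with $|u|,|v|$, which does not increase the Dirichlet energy and does not change $g$).

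Once that pointwise bound is in hand, set $W_0:=\sqrt{u^2+v^2}$. Then $\int |x|^{-bp}g(u,v)\le (\min f)^{-p/2}\int |x|^{-bp}W_0^p$, while for the gradient term I would use the standard fact that $|\nabla W_0|^2\le |\nabla u|^2+|\nabla v|^2$ a.e.\ (since $\nabla W_0 = (u\nabla u+v\nabla v)/W_0$ on $\{W_0>0\}$ and Cauchy--Schwarz), together with $W_0\in D_a^{1,2}(\R^n)$. Applying the scalar inequality \eqref{ckn} to $W_0$,
\begin{equation*}
\left(\int_{\R^n}|x|^{-bp}W_0^p\right)^{2/p}\le S(a,b,n)\int_{\R^n}|x|^{-2a}|\nabla W_0|^2\le S(a,b,n)\int_{\R^n}|x|^{-2a}\left(|\nabla u|^2+|\nabla v|^2\right),
\end{equation*}
and combining gives \eqref{ckn1} with the stated constant $\bar S = S\min f$. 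Sharpness is the reverse direction: taking $u=c_1W$, $v=c_2W$ with $W$ a scalar CKN extremal and $(c_1,c_2)$ a minimizer of $f$, one checks that both inequalities used above become equalities (the pointwise bound is an equality by the choice of $(c_1,c_2)$ since then $W_0=\sqrt{c_1^2+c_2^2}\,W$ and $g(c_1W,c_2W)=g(c_1,c_2)W^p=(\min f)^{-p/2}(c_1^2+c_2^2)^{p/2}W^p$; the gradient inequality is an equality because $u,v$ are parallel), so $\bar S$ cannot be larger.

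For the equality characterization I would argue that if \eqref{ckn1} holds with equality then all intermediate inequalities are equalities: $|\nabla W_0|^2=|\nabla u|^2+|\nabla v|^2$ forces $\nabla u$ and $\nabla v$ to be parallel a.e., hence (since $u,v\in D_a^{1,2}$ vanish at infinity in the appropriate sense) $u=c_1 W$, $v=c_2 W$ for some scalar $W$ and constants $c_1,c_2\ge 0$ not both zero; equality in \eqref{ckn} applied to $W_0=\sqrt{c_1^2+c_2^2}\,W$ forces $W$ to be a scalar CKN extremal; and equality in the pointwise bound $g(c_1,c_2)(c_1^2+c_2^2)^{-p/2}=\min f$ forces $(c_1,c_2)$ (normalized to the sphere $x+y=1$, using homogeneity) to attain $\min f$. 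The main obstacle I anticipate is not the pointwise algebra but the rigidity step: justifying that parallel gradients plus membership in $D_a^{1,2}(\R^n)$ genuinely yields $u=c_1W$, $v=c_2W$ globally (one must rule out $u$ and $v$ being supported on disjoint components, or differing by ``locally constant'' factors), and confirming that the scalar CKN extremals are nonnegative and unique up to scaling in the relevant parameter range so that the conclusion $(u,v)=(c_1W,c_2W)$ with $W$ a \emph{single} minimizer is correctly stated; here the symmetry/classification facts recalled in the introduction (Chou--Chu, Dolbeault--Esteban--Loss) for \eqref{eq1} do the work, though in the symmetry-breaking regime $a<b<b_{\mathrm{FS}}(a)$ one only needs that extremals exist and are positive, not that they are radial.
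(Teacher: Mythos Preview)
Your approach is correct but genuinely different from the paper's. The paper never introduces $W_0=\sqrt{u^2+v^2}$; instead it applies the scalar inequality \eqref{ckn} \emph{separately} to $u$ and to $v$, controls the cross term by H\"older,
\[
\int_{\R^n}|x|^{-bp}|u|^\alpha|v|^\beta\le\Bigl(\int_{\R^n}|x|^{-bp}|u|^p\Bigr)^{\alpha/p}\Bigl(\int_{\R^n}|x|^{-bp}|v|^p\Bigr)^{\beta/p},
\]
and then, setting $x_1^p=\int|x|^{-bp}|u|^p$ and $x_2^p=\int|x|^{-bp}|v|^p$, reads off the bound $S(a,b,n)f(x_1,x_2)$ for the Rayleigh quotient. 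In short: you compare the pointwise direction of $(u,v)$ to the minimizers of $f$ and apply scalar CKN once to the modulus $W_0$; the paper compares the integrated $L^p$-norms and applies scalar CKN twice.

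For the bare inequality both routes are equally short. The real difference is in the equality discussion, and here the paper's route is cleaner: equality in H\"older forces $|u|^p$ and $|v|^p$ to be proportional a.e.\ (when both are nonzero), and equality in the two scalar CKN applications forces each of $u,v$ to be a scalar extremal, so $(u,v)=(c_1W,c_2W)$ drops out with no further work. Your route requires the rigidity step you flag --- from $u\nabla v=v\nabla u$ a.e.\ together with the pointwise constraint on $(|u|,|v|)/W_0$ to a global product structure --- which is true (positivity of the extremal $W_0$ and connectedness of $\R^n\setminus\{0\}$ handle the case $\min f<1$; when $\min f=1$ one must also exclude $W_0\chi_A$ for nontrivial $A$, which fails to lie in $D_a^{1,2}$), but is extra effort that the H\"older argument sidesteps entirely. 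On the other hand, your pointwise viewpoint makes the role of $f$ more transparent and would adapt without change to more general homogeneous couplings $g(u,v)$, whereas the paper's argument is tied to the specific form $|u|^p+|v|^p+p\nu|u|^\alpha|v|^\beta$ through the single H\"older step.
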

Another issue we are concerned with is the nondegeneracy of positive synchronized solutions of the system \eqref{sys1}.
\begin{theorem}\label{non}
    Assume $a<0$ and $b_{\mathrm{FS}}(a)<b$, or $a\geq0$. Let $(u,v)$ be a positive synchronized solution to the system \eqref{sys1}. Suppose $(u,v)=(c_1W,c_2W)$, where $W$ is a positive solution to the equation \eqref{eq1}. Then $(u,v)$ is nondegenerate if and only if
    \begin{equation}\label{aaaa}
        \nu\alpha\beta c_1^{\alpha-2}c_2^{\beta}+\nu\alpha\beta c_1^{\alpha}c_2^{\beta-2}\neq p-2.
    \end{equation}
    In particular, when $\nu\leq\frac{p-2}{2\alpha\beta}$, the condition in \eqref{aaaa} always hold.
\end{theorem}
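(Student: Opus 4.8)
The plan is to linearize the system \eqref{sys1} around the synchronized solution $(u,v)=(c_1W,c_2W)$ and reduce the question to the known spectral picture for the linearization of the scalar H\'enon equation \eqref{eq1} at $W$. Writing a perturbation $(\varphi,\psi)\in D_a^{1,2}(\R^n)\times D_a^{1,2}(\R^n)$, the linearized operator at $(c_1W,c_2W)$ acts by
\begin{equation*}
\begin{cases}
-\div(|x|^{-2a}\nabla\varphi)=|x|^{-bp}W^{p-2}\big(A_{11}\varphi+A_{12}\psi\big)\\
-\div(|x|^{-2a}\nabla\psi)=|x|^{-bp}W^{p-2}\big(A_{21}\varphi+A_{22}\psi\big),
\end{cases}
\end{equation*}
where the matrix $A=A(c_1,c_2,\nu,\alpha,\beta)$ collects the coefficients obtained by differentiating the right-hand side of \eqref{sys1}; using $\alpha+\beta=p$ and the normalization \eqref{re2}, a direct computation gives
\begin{equation*}
A=\begin{pmatrix} (p-1)c_1^{p-2}+\nu\alpha(\alpha-1)c_1^{\alpha-2}c_2^\beta & \nu\alpha\beta c_1^{\alpha-1}c_2^{\beta-1}\\[2pt] \nu\alpha\beta c_1^{\alpha-1}c_2^{\beta-1} & (p-1)c_2^{p-2}+\nu\beta(\beta-1)c_1^\alpha c_2^{\beta-2}\end{pmatrix}.
\end{equation*}
Since $A$ is symmetric, diagonalize it: $A=Q^{\mathsf T}\,\mathrm{diag}(\lambda_+,\lambda_-)\,Q$ with $Q\in O(2)$. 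In the rotated coordinates $(\tilde\varphi,\tilde\psi)=Q(\varphi,\psi)$ the system decouples into two scalar equations $-\div(|x|^{-2a}\nabla\tilde\varphi)=\lambda_+|x|^{-bp}W^{p-2}\tilde\varphi$ and similarly with $\lambda_-$. Hence the kernel of the linearization is the direct sum of the kernels of the two scalar operators $L_\lambda w:=-\div(|x|^{-2a}\nabla w)-\lambda|x|^{-bp}W^{p-2}w$ for $\lambda\in\{\lambda_+,\lambda_-\}$.

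The next step is to identify for which $\lambda$ the operator $L_\lambda$ has nontrivial kernel in $D_a^{1,2}(\R^n)$. For $\lambda=p-1$ this is exactly the linearization of the scalar H\'enon equation \eqref{eq1} at its positive ground state $W$; in the parameter range $a<0,\ b_{\mathrm{FS}}(a)<b<a+1$ (where $W$ is radial, given by \eqref{bubb}--\eqref{bub}) the nondegeneracy of $W$ up to the natural symmetries is known — the kernel of $L_{p-1}$ is spanned by $\partial_\mu U_\mu|_{\mu=1}$ (and, when $b=0$, also the translations), all of which are again solutions of the \emph{same} linearized \emph{scalar} equation, so they do \emph{not} enlarge the kernel of the system beyond the expected symmetry directions. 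What matters is that $\lambda=p-1$ is the only value of $\lambda$ in a suitable range for which $L_\lambda$ degenerates; standard Rellich–Pohozaev / Hardy-type identities combined with the explicit form of $W$ show that $L_\lambda$ is invertible for $\lambda\neq p-1$ in the relevant spectral window. Therefore $(u,v)$ is degenerate (beyond symmetries) precisely when one of the eigenvalues $\lambda_\pm$ equals $p-1$, i.e. $\det(A-(p-1)I)=0$.

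Finally, compute $\det(A-(p-1)I)$. Using $\alpha+\beta=p$ and \eqref{re2} to eliminate $c_i^{p-2}$ (namely $c_1^{p-2}=1-\nu\alpha c_1^{\alpha-2}c_2^\beta$, $c_2^{p-2}=1-\nu\beta c_1^\alpha c_2^{\beta-2}$), the diagonal entries become $A_{11}-(p-1)=(p-2)-\nu\alpha p\,c_1^{\alpha-2}c_2^\beta+\nu\alpha(\alpha-1)c_1^{\alpha-2}c_2^\beta$ and similarly for $A_{22}$; after simplification one finds
\begin{equation*}
\det\big(A-(p-1)I\big)=\big(p-2\big)\Big(p-2-\nu\alpha\beta c_1^{\alpha-2}c_2^\beta-\nu\alpha\beta c_1^\alpha c_2^{\beta-2}\Big),
\end{equation*}
(the cross term $(\nu\alpha\beta c_1^{\alpha-1}c_2^{\beta-1})^2$ cancels against part of the product of the diagonal corrections, using $\alpha\beta c_1^{\alpha-2}c_2^\beta\cdot c_1^\alpha c_2^{\beta-2}=(\alpha c_1^{\alpha-1}c_2^{\beta-1})(\beta c_1^{\alpha-1}c_2^{\beta-1})$). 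Since $p>2$, this determinant vanishes iff $\nu\alpha\beta c_1^{\alpha-2}c_2^\beta+\nu\alpha\beta c_1^\alpha c_2^{\beta-2}=p-2$, which is exactly the negation of \eqref{aaaa}; hence $(u,v)$ is nondegenerate iff \eqref{aaaa} holds. For the last assertion, bound $c_1^{\alpha-2}c_2^\beta+c_1^\alpha c_2^{\beta-2}=c_1^{\alpha-2}c_2^{\beta-2}(c_1^2+c_2^2)$ and use the normalization together with $c_1^{p-2},c_2^{p-2}\le 1$ to get $\nu\alpha\beta(c_1^{\alpha-2}c_2^\beta+c_1^\alpha c_2^{\beta-2})\le 2\nu\alpha\beta<p-2$ when $\nu\le\frac{p-2}{2\alpha\beta}$, so strict inequality in \eqref{aaaa} is automatic.

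I expect the main obstacle to be the rigorous justification of the second step — pinning down the full kernel of $L_\lambda$ in $D_a^{1,2}$ for $\lambda\ne p-1$ and correctly accounting for the symmetry-induced kernel at $\lambda=p-1$ (especially distinguishing the $b=0$ case, where translations appear, from $b\ne0$). This requires either invoking a nondegeneracy result for the scalar critical H\'enon equation from the literature (in the radial regime $b\ge b_{\mathrm{FS}}(a)$) or reproving it via the ODE reduction and explicit Jacobi-field analysis analogous to \cite{Cho,Dol}; the matrix algebra in steps one and three is routine by comparison.
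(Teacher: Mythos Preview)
Your overall strategy --- linearize at $(c_1W,c_2W)$, diagonalize the $2\times 2$ coefficient matrix $A$, and reduce to the scalar spectral problem for $L_\lambda w=-\div(|x|^{-2a}\nabla w)-\lambda|x|^{-bp}W^{p-2}w$ --- is exactly the paper's approach. However, the criterion you extract is wrong, and the algebra you use to support it contains an error that hides the mistake.

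The key point you miss is that $p-1$ is \emph{always} an eigenvalue of $A$, with eigenvector $(c_1,c_2)$. Indeed, using $\alpha+\beta=p$ and \eqref{re2},
\[
A_{11}c_1+A_{12}c_2=(p-1)c_1^{p-1}+\nu\alpha(\alpha-1+\beta)c_1^{\alpha-1}c_2^{\beta}=(p-1)c_1\bigl(c_1^{p-2}+\nu\alpha c_1^{\alpha-2}c_2^{\beta}\bigr)=(p-1)c_1,
\]
and similarly for the second row. Consequently $\det\bigl(A-(p-1)I\bigr)\equiv 0$; your claimed value $(p-2)\bigl(p-2-\nu\alpha\beta c_1^{\alpha-2}c_2^{\beta}-\nu\alpha\beta c_1^{\alpha}c_2^{\beta-2}\bigr)$ is an algebraic slip (the correct diagonal entries are $A_{ii}-(p-1)=-\nu\alpha\beta c_1^{\alpha-2}c_2^{\beta}$ and $-\nu\alpha\beta c_1^{\alpha}c_2^{\beta-2}$, and the off-diagonal square cancels them exactly). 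So ``an eigenvalue of $A$ equals $p-1$'' cannot be the degeneracy criterion: that eigenvalue is precisely what produces the expected one-dimensional scaling kernel $\R\,(c_1\partial_\mu U,c_2\partial_\mu U)$ and nothing more.

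What the paper actually does is compute the \emph{other} eigenvalue,
\[
\lambda_-=\operatorname{tr}A-(p-1)=p-1-\nu\alpha\beta\bigl(c_1^{\alpha-2}c_2^{\beta}+c_1^{\alpha}c_2^{\beta-2}\bigr)<p-1,
\]
and then invoke the precise scalar spectrum (Lemma~\ref{lem}): the eigenvalues of $L_\lambda$ are $\lambda_1=1$, $\lambda_2=p-1$, $\lambda_3>p-1$. Since $\lambda_-<p-1=\lambda_2$, the rotated component $\tilde\psi$ contributes extra kernel iff $\lambda_-=\lambda_1=1$, which is exactly the negation of \eqref{aaaa}. Your statement that ``$L_\lambda$ is invertible for $\lambda\neq p-1$'' is therefore false at $\lambda=1$ (kernel spanned by $U$), and this is precisely the eigenvalue that governs the answer. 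Once you replace your criterion by ``$\lambda_-\neq 1$'' and cite Lemma~\ref{lem} for the scalar spectrum, your argument becomes the paper's.
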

\begin{remark}
    The special case where $a=b=0$ and $\nu=\frac{1}{2^*}$ has already been investigated in \cite{Pen}.
\end{remark}
\begin{remark}
     In fact, Theorem \ref{ground} and Theorem \ref{vckn} remain vaild when $n=2,a<0,a<b<a+1$, or when $n=1,a<-\frac{1}{2},a+\frac{1}{2}<b<a+1$. Additionally, Theorem \ref{non} remains valid when $n=2,a<0,b_{\mathrm{FS}}(a)<b<a+1$. Further clarification will be provided in our subsequent proofs.
\end{remark}

In this paper, we also consider the following $k$-coupled ($k\geq 2$) H\'enon-Sobolev critical system:
\begin{equation}\label{sys3}
    \begin{cases}
        -\div(|x|^{-2a}\nabla u_i)=\sum\limits_{j=1}^{k}\kappa_{ij}|x|^{-bp}|u_i|^{\alpha_{ij}-2}|u_j|^{\beta_{ij}}u_i\quad\text{in }\R^n\\
        u_i\in D_a^{1,2}(\R^n)\quad\forall\ 1\leq i\leq k
    \end{cases}
\end{equation}
where $n\geq 3,-\infty< a<\frac{n-2}{2},a\leq b<a+1,p=\frac{2n}{n-2+2(b-a)},\kappa_{ij}>0$, and $\alpha_{ij}>1,\beta_{ij}>1$ satisfy $\alpha_{ij}+\beta_{ij}=p$. It is not hard to see that most of our arguments for the system \eqref{sys1} can be applied to this case with minor modifications.
\begin{theorem}\label{thm7}
    Assume $a\geq 0$ and $(a,b)\neq(0,0)$. Let $(u_1,\ldots,u_k)\in \left(D_a^{1,2}(\R^n)\right)^k$ be a positive solution to the system \eqref{sys3}. Then, for any $1\leq i\leq k$:

    $(1)$ $u_i$ is radially symmetric about the origin.

    $(2)$ The limits $\lim\limits_{x\rightarrow 0}u_i(x)$ and $\lim\limits_{x\rightarrow \infty}|x|^{n-2-2a}u_i(x)$ exist and are positive.

    $(3)$ There exists a constant $\tau>0$ independent of $i$ such that after the dilation $u_i(x)\rightarrow \tau^{\frac{n-2-2a}{2}}u_i(\tau x)$, the function $e^{-\frac{n-2-2a}{2}}u_i(e^{-t})$ is even in $t\in\R$ and strictly decreasing when $t>0$.

    Moreover, when $k=2$ and $\alpha_{12}=\beta_{21},\beta_{12}=\alpha_{21},\frac{\kappa_{12}}{\kappa_{21}}=\frac{\alpha_{12}}{\beta_{12}}$, there exist positive constants $\mu_0>0,c_1>0,c_2>0$ such that
    \begin{equation}\label{re6}
        (u_1,u_2)=(c_1U_{\mu_0},c_2U_{\mu_0})
    \end{equation}
    and
    \begin{equation}\label{re7}
        \begin{cases}
            \kappa_{11}c_1^{p-2}+\kappa_{12}c_1^{\alpha_{12}-2}c_2^{\beta_{12}}=1,\\
            \kappa_{22}c_2^{p-2}+\kappa_{21}c_2^{\alpha_{21}-2}c_1^{\beta_{21}}=1.
        \end{cases}
    \end{equation}
    
When $a=b=0$, the above results still hold up to a suitable translation.
\end{theorem}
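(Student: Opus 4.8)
The plan is to run the entire machinery developed for the two-component system \eqref{sys1} (Theorems \ref{thm1}, \ref{thm3}, \ref{thm4}, \ref{thm5}) componentwise, checking at each stage that the structural features actually used survive the passage from two to $k$ equations. For part (1), I would apply the generalized moving plane method of Chou–Chu \cite{Cho} to the vector $(u_1,\ldots,u_k)$ simultaneously. The point is that each right-hand side $\sum_{j}\kappa_{ij}|x|^{-bp}|u_i|^{\alpha_{ij}-2}|u_j|^{\beta_{ij}}u_i$ is increasing in each $u_j$ (because $\kappa_{ij}>0$, $\alpha_{ij},\beta_{ij}>1$), so the system is cooperative, exactly the property Troy's adaptation \cite{Tro} exploits; the weight $|x|^{-2a}$ with $a\ge 0$ and $|x|^{-bp}$ with $b>0$ are handled precisely as in the proof of Theorem \ref{thm3}. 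Standard maximum-principle and Hopf-lemma arguments then force all reflected differences $u_i(x)-u_i(x^\lambda)$ to have a sign, giving radial symmetry and (for $b>0$) centering at the origin. For part (2), once radial symmetry is known the system becomes a coupled ODE system in $r=|x|$; the asymptotic analysis of Theorem \ref{thm4} — writing $|x|=e^{-t}$, passing to the Emden–Fowler variable, and using the regularity/removable-singularity arguments at $0$ and decay estimates at $\infty$ — applies to each component since the nonlinear terms remain of the critical growth $p$ in the combined variables. For part (3), I would invoke the Kelvin-type modified inversion of Theorem \ref{thm5}: because $p=\frac{2n}{n-2+2(b-a)}$ and the weights scale correctly, the transformation $u_i(x)\mapsto |x|^{n-2-2a}u_i(x/|x|^2)$ maps solutions to solutions of the same system; combined with the moving-plane symmetry this yields the modified inversion symmetry for each $u_i$ after a common dilation, and hence evenness and strict monotonicity of $e^{-\frac{n-2-2a}{2}\,t}u_i(e^{-t})$ in $t$, exactly as in \cite{Cat}.

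For the final classification statement ($k=2$ with $\alpha_{12}=\beta_{21}$, $\beta_{12}=\alpha_{21}$, $\kappa_{12}/\kappa_{21}=\alpha_{12}/\beta_{12}$), the symmetry conditions are precisely what is needed to put the system into the variational/gradient form of \eqref{sys1}: setting $\alpha=\alpha_{12}$, $\beta=\beta_{12}$ one has $\alpha+\beta=p$, and the compatibility $\kappa_{12}\beta_{12}=\kappa_{21}\alpha_{21}$ means there is a single coupling functional $|u_1|^{\alpha}|u_2|^{\beta}$ whose partial derivatives reproduce both coupling terms (after absorbing $\kappa_{11},\kappa_{22}$ by rescaling each $u_i$). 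Thus the system is, up to an explicit change of variables, an instance of \eqref{sys1} with some $\nu>0$, and Theorem \ref{thm1} gives $(u_1,u_2)=(c_1U_{\mu_0},c_2U_{\mu_0})$. Plugging the ansatz back into \eqref{sys3} and using that $U_{\mu_0}$ solves \eqref{eq1} (so $-\div(|x|^{-2a}\nabla U_{\mu_0})=|x|^{-bp}U_{\mu_0}^{p-1}$) yields the algebraic relations \eqref{re7} by matching coefficients of $|x|^{-bp}U_{\mu_0}^{p-1}$. The $a=b=0$ case is the usual translation-invariant situation and is handled by the same reductions after a translation.

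I expect the main obstacle to be verifying that the moving-plane argument of \cite{Cho} is genuinely robust under the cooperative structure for general $k$: one must check that the auxiliary functions and the "narrow domain / decay at infinity" comparison principles still close up when there are $k$ coupled inequalities rather than one or two, in the presence of the singular weights. This is technically the heaviest part, though conceptually routine once one notices the cooperativity; all the remaining steps are faithful componentwise repetitions of the two-component proofs, plus the elementary algebra identifying the symmetric $k=2$ case with \eqref{sys1}.
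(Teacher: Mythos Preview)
Your treatment of parts (1)--(3) is correct and matches the paper's approach exactly: the paper simply states that the proofs of Theorems \ref{thm3}, \ref{thm4}, \ref{thm5} carry over to the $k$-component cooperative system \eqref{sys3} with minor modifications, and your outline captures precisely why (cooperativity, same weights, same critical scaling).

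There is, however, a genuine gap in your handling of the $k=2$ classification. Your plan is to rescale $u_i\mapsto\lambda_i u_i$ so as to absorb $\kappa_{11},\kappa_{22}$ and land on an instance of \eqref{sys1}, then cite Theorem \ref{thm1}. This reduction does not work in general. Matching the pure terms forces $\kappa_{11}\lambda_1^{p-2}=1=\kappa_{22}\lambda_2^{p-2}$, while matching the coupling terms to $\nu\alpha,\nu\beta$ together with the hypothesis $\kappa_{12}/\kappa_{21}=\alpha/\beta$ forces $\lambda_1=\lambda_2$; hence the rescaling succeeds only when $\kappa_{11}=\kappa_{22}$. For $\kappa_{11}\neq\kappa_{22}$ the system is \emph{not} a rescaled copy of \eqref{sys1}.

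The fix (and what the paper has in mind by ``minor modifications'') is not to reduce to \eqref{sys1} at all, but to rerun the ODE argument of Theorem \ref{thm1} directly for the Emden--Fowler system with coefficients $\kappa_{ij}$. The Wronskian identity \eqref{zxc} becomes $(\varphi_{u_1}'\varphi_{u_2}-\varphi_{u_1}\varphi_{u_2}')'+\varphi_{u_1}\varphi_{u_2}^{\,p-1}g(\varphi_{u_1}/\varphi_{u_2})=0$ with $g(t)=\kappa_{11}t^{p-2}+\kappa_{12}t^{\alpha-2}-\kappa_{22}-\kappa_{21}t^{\alpha}$; the compatibility $\kappa_{12}/\alpha=\kappa_{21}/\beta=:\nu$ is exactly what makes the energy computation \eqref{zxc2}--\eqref{zxc5} close up (the cross terms still collapse to $\tfrac{\alpha\beta\nu}{p}(L_0^2+1)$), yielding $g(L)=0$ at the initial ratio. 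Theorem \ref{thm8}, which is already stated for the general system \eqref{sys3}, then finishes the job. So your overall strategy is right, but the shortcut through Theorem \ref{thm1} must be replaced by a direct repetition of its proof.
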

\begin{theorem}
    Assume $a<0$ and $a<b$, or $a\geq 0$. Let $(u_1,\ldots,u_k)\in \left(D_a^{1,2}(\R^n)\right)^k$ be a nonnegative solution to the system \eqref{sys3}. Suppose $\alpha_{ij}=\beta_{ji}$ and $\frac{\kappa_{ij}}{\kappa_{ji}}=\frac{\alpha_{ij}}{\beta_{ij}}$ for any $1\leq i,j\leq k$. Then $(u_1,\ldots,u_k)$ is a ground state if and only if $(u_1,\ldots,u_k)=(sc_1W,\ldots,sc_kW)$, where $W$ is a nonnegative ground state to the H\'enon equation \eqref{eq1}, and $(c_1,\ldots,c_k)$ is a minimum of the following function:
    \begin{equation*}
        f(x_1,\ldots,x_k)=\frac{\sum\limits_{i=1}^k x_i^2}{\left(\sum\limits_{i,j=1}^k\kappa_{ij}x_i^{\alpha_{ij}}x_j^{\beta_{ij}}\right)^\frac{2}{p}},\quad x_1,\ldots,x_k\geq 0,\;\; \sum\limits_{i=1}^k x_i=1.
    \end{equation*}
    The number $s$ is a positive constant such that
    \begin{equation*}
        \sum\limits_{j=1}^k \kappa_{ij}(sc_i)^{\alpha_{ij}-1}(sc_j)^{\beta_{ij}}=sc_i\quad\text{for any }1\leq i\leq k.
    \end{equation*}
    Moreover, the corresponding least energy is given by
    \begin{equation*}
        \left(\frac{1}{2}-\frac{1}{p}\right)f(c_1,\ldots,c_k)^{\frac{p}{p-2}}S(a,b,n)^\frac{p}{p-2}.
    \end{equation*}
\end{theorem}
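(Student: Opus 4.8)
Running the argument parallel to those of Theorems~\ref{vckn} and~\ref{ground}, I would first establish the $k$-component version of Theorem~\ref{vckn}: for every $(u_1,\dots,u_k)\in\bigl(D_a^{1,2}(\R^n)\bigr)^k$,
\begin{equation*}
\bar S(a,b,n)\Bigl(\int_{\R^n}|x|^{-bp}\sum_{i,j=1}^{k}\kappa_{ij}|u_i|^{\alpha_{ij}}|u_j|^{\beta_{ij}}\Bigr)^{2/p}\le\int_{\R^n}|x|^{-2a}\sum_{i=1}^{k}|\nabla u_i|^2 ,
\end{equation*}
with $\bar S(a,b,n)=S(a,b,n)\min f$ (minimum over $x_i\ge0$, $(x_1,\dots,x_k)\ne0$), equality holding exactly for $(u_1,\dots,u_k)=(c_1W,\dots,c_kW)$ where $W$ is an extremal of \eqref{ckn} and $(c_1,\dots,c_k)$ attains $\min f$. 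Exactly as for $k=2$, set $\rho\defeq\bigl(\sum_iu_i^2\bigr)^{1/2}$ and $u_i=\rho\,\omega_i$ with $\sum_i\omega_i^2=1$; then $\sum_i\omega_i\nabla\omega_i=0$ gives $\sum_i|\nabla u_i|^2=|\nabla\rho|^2+\rho^2\sum_i|\nabla\omega_i|^2\ge|\nabla\rho|^2$, while pointwise $\sum_{i,j}\kappa_{ij}|u_i|^{\alpha_{ij}}|u_j|^{\beta_{ij}}=\rho^p\sum_{i,j}\kappa_{ij}\omega_i^{\alpha_{ij}}\omega_j^{\beta_{ij}}\le M\rho^p$ with $M\defeq\max_{\|\omega\|_2=1}\sum_{i,j}\kappa_{ij}\omega_i^{\alpha_{ij}}\omega_j^{\beta_{ij}}=(\min f)^{-p/2}$; feeding $\rho$ into \eqref{ckn} yields the inequality. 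In the equality case, equality throughout forces $\nabla\omega_i\equiv0$ on $\{\rho>0\}$ and $(\omega_i(x))$ to maximise $\sum_{i,j}\kappa_{ij}\omega_i^{\alpha_{ij}}\omega_j^{\beta_{ij}}$ for a.e.\ $x\in\{\rho>0\}$; since under the hypotheses the scalar extremal exists and, by regularity and the strong maximum principle, is strictly positive on the connected set $\R^n\setminus\{0\}$ (or $\R^n$ if $a=b=0$), the $\omega_i$ must be constants $c_i$ with $(c_i)$ attaining $\min f$, and $\rho$ an extremal of \eqref{ckn}.

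\textbf{Step 2 (least-energy level).} The hypotheses $\alpha_{ij}=\beta_{ji}$ and $\kappa_{ij}/\kappa_{ji}=\alpha_{ij}/\beta_{ij}$ are exactly what makes \eqref{sys3} the Euler--Lagrange system of
\begin{equation*}
E(u_1,\dots,u_k)=\frac12\int_{\R^n}|x|^{-2a}\sum_i|\nabla u_i|^2-\frac1p\int_{\R^n}|x|^{-bp}\sum_{i,j}\kappa_{ij}|u_i|^{\alpha_{ij}}|u_j|^{\beta_{ij}} ,
\end{equation*}
because they give $\tfrac1p\,\partial_{x_i}\bigl(\sum_{l,m}\kappa_{lm}x_l^{\alpha_{lm}}x_m^{\beta_{lm}}\bigr)=\sum_j\kappa_{ij}x_i^{\alpha_{ij}-1}x_j^{\beta_{ij}}$. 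Testing the $i$-th equation with $u_i$ and summing shows every nontrivial solution lies on the Nehari set $\mathcal N=\{\,\int|x|^{-2a}\sum_i|\nabla u_i|^2=\int|x|^{-bp}\sum_{i,j}\kappa_{ij}|u_i|^{\alpha_{ij}}|u_j|^{\beta_{ij}}\,\}$, on which $E=(\tfrac12-\tfrac1p)\int|x|^{-2a}\sum_i|\nabla u_i|^2$; Step~1 then gives $\int|x|^{-2a}\sum_i|\nabla u_i|^2\ge\bar S(a,b,n)^{p/(p-2)}$ on $\mathcal N$. Conversely, a one-parameter rescaling of $(c_1W,\dots,c_kW)$, with $W$ an extremal of \eqref{ckn} and $(c_1,\dots,c_k)$ attaining $\min f$, lands on $\mathcal N$, saturates Step~1, attains the value $\bar S(a,b,n)^{p/(p-2)}$ there, and — once normalised as in Step~3 — solves \eqref{sys3}. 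Hence the least energy over nontrivial solutions is $(\tfrac12-\tfrac1p)\bar S(a,b,n)^{p/(p-2)}=(\tfrac12-\tfrac1p)f(c_1,\dots,c_k)^{p/(p-2)}S(a,b,n)^{p/(p-2)}$.

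\textbf{Step 3 (characterisation and normalisation).} If $(u_1,\dots,u_k)$ is a nonnegative ground state, then by Step~2 it attains $\min_{\mathcal N}\int|x|^{-2a}\sum_i|\nabla u_i|^2$, hence saturates Step~1, so $(u_1,\dots,u_k)=(c_1W,\dots,c_kW)$ with $W$ an extremal of \eqref{ckn} — equivalently, after a dilation, a positive ground state of \eqref{eq1} normalised so that $-\div(|x|^{-2a}\nabla W)=|x|^{-bp}W^{p-1}$ — and $(c_1,\dots,c_k)$ attaining $\min f$. Substituting $u_i=sc_iW$ into \eqref{sys3} reduces it to $s^{p-2}\sum_j\kappa_{ij}c_i^{\alpha_{ij}-1}c_j^{\beta_{ij}}=c_i$ for all $i$; the Lagrange condition for $(c_1,\dots,c_k)$ at a minimum of $f$ on the simplex, combined with the $\partial_{x_i}$ identity of Step~2, shows that $c_i\,/\,\sum_j\kappa_{ij}c_i^{\alpha_{ij}-1}c_j^{\beta_{ij}}$ is one and the same positive number for every $i$ with $c_i>0$, so a unique $s>0$ exists; the equations with $c_i=0$ hold automatically since $\alpha_{ij}>1$. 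The converse is immediate: for such $(sc_1W,\dots,sc_kW)$ the same computation shows it solves \eqref{sys3}, it is nonnegative, and by Step~2 it is a ground state with the stated energy.

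\textbf{Expected main difficulty.} The crux is the equality analysis in Step~1: one must rule out $(\omega_i(x))$ jumping among distinct maximisers of $\sum_{i,j}\kappa_{ij}\omega_i^{\alpha_{ij}}\omega_j^{\beta_{ij}}$ over different parts of $\{\rho>0\}$, which hinges on the strict positivity and connectedness of $\{\rho>0\}$ for a scalar CKN extremal; this relies on the existence and regularity of the scalar extremal and, when $a<0$, genuinely on the hypothesis $a<b$ (the extremal can fail to exist for $a=b<0$). A secondary, essentially routine, difficulty is the bookkeeping for $s$ — and the check that $(sc_1W,\dots,sc_kW)$ satisfies every equation of \eqref{sys3}, the trivial components included — when $\min f$ is attained on the boundary of the simplex.
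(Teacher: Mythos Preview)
Your argument is correct, and Steps~2--3 mirror the paper's proof of Theorem~\ref{ground} essentially verbatim (Nehari identity, then the sharp vector inequality gives the lower bound, with equality characterising the ground states). The genuine difference is in Step~1. The paper proves the vector CKN (Theorem~\ref{vckn}) by applying the \emph{scalar} CKN \eqref{ckn} to each component $u_i$ separately, then bounding every cross term $\int|x|^{-bp}|u_i|^{\alpha_{ij}}|u_j|^{\beta_{ij}}$ by H\"older, so that with $x_i:=\bigl(\int|x|^{-bp}|u_i|^p\bigr)^{1/p}$ one lands directly on $S(a,b,n)\,f(x_1,\dots,x_k)$; equality then forces each $u_i$ to be a scalar extremal and all pairs to be proportional via the H\"older equality case. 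Your route instead uses the polar decomposition $u_i=\rho\,\omega_i$ and the Kato-type identity $\sum_i|\nabla u_i|^2=|\nabla\rho|^2+\rho^2\sum_i|\nabla\omega_i|^2$, applying the scalar CKN only once, to $\rho$.

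What each approach buys: the paper's componentwise-plus-H\"older argument is slightly shorter and its equality analysis is immediate (no connectedness issue arises, since H\"older equality for every pair $(i,j)$ with $\kappa_{ij}>0$ already forces global proportionality). Your polar approach is more geometric and makes the role of the algebraic optimisation in $f$ transparent as a \emph{pointwise} bound; the price is exactly the ``expected difficulty'' you flag, namely ruling out that $(\omega_i(x))$ hops between distinct maximisers of $\sum_{i,j}\kappa_{ij}\omega_i^{\alpha_{ij}}\omega_j^{\beta_{ij}}$ on different pieces of $\{\rho>0\}$. Your resolution---equality forces $\rho$ to be a scalar extremal, hence strictly positive on the connected set $\R^n\setminus\{0\}$, and then $\nabla\omega_i\equiv0$ gives constancy---is correct under the standing hypotheses ($a\ge0$, or $a<0$ with $a<b$, so that a scalar extremal exists). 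Both routes lead to the same extremal set and the same constant $\bar S=S\cdot\min f$.
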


A remaining question is whether every positive solution of the system \eqref{sys3} (for $k\geq3$) is synchronized. Theorem \ref{thm7} provides various qualitative properties for positive solutions. However, it appears that the ODE techniques from \cite{Esp2,Wei} may not be applicable in this case. In this context, we present a uniqueness result under prescribed initial conditions.
\begin{theorem}[Uniqueness]\label{thm8}
    Assume $a\geq 0$.  Let $(u_1,\ldots,u_k)$ and $(v_1,\ldots,v_k)\in \left(D_a^{1,2}(\R^n)\right)^k$ be two positive solutions to the system \eqref{sys3}. Suppose $u_i$ and $v_i$ are radially symmetric about the origin for $1\leq i\leq k$. If there exists a positive constant $\theta$ such that $u_i(0)=\theta v_i(0)$ for any $1\leq i\leq k$, then $u_i\equiv \theta v_i$ for any $1\leq i\leq k$.
\end{theorem}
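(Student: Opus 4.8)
The plan is to reduce the problem to an ODE uniqueness statement in the spirit of \cite{Cho}, exploiting the qualitative structure already established in Theorem \ref{thm7}. First I would apply Theorem \ref{thm7}(1)--(3) to both solutions $(u_1,\dots,u_k)$ and $(v_1,\dots,v_k)$: since $a\geq 0$ and $b\neq 0$, each component is radially symmetric about the origin, has a finite positive limit at the origin, and (after the appropriate dilation) enjoys the modified inversion symmetry. The key point is that the dilation normalization can be pinned down: radial symmetry lets us write $u_i(x)=\phi_i(|x|)$, $v_i(x)=\psi_i(|x|)$, and in the logarithmic variable $t=-\log|x|$ together with the Emden--Fowler-type substitution used to prove Theorem \ref{thm7}(3), both systems become autonomous second-order ODE systems of the form $-\ddot{y}_i + c\, y_i = \sum_{j}\kappa_{ij}\,y_i^{\alpha_{ij}-1} y_j^{\beta_{ij}}$ on $\R$, with the same constant $c = \left(\tfrac{n-2-2a}{2}\right)^2 > 0$ (here $b\neq 0$ guarantees $c>0$, which is exactly why this hypothesis appears). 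The solutions are even in $t$, decaying at $\pm\infty$, so each is determined by its value and (vanishing) derivative at $t=0$; the hypothesis $u_i(0)=\theta v_i(0)$ translates into matching the "heights" at $t=+\infty$ after inversion, equivalently an initial-value condition for the ODE system.

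Next I would set $w_i(t) = y_i^{(u)}(t) - \theta\, y_i^{(v)}(t)$ where $y_i^{(u)},y_i^{(v)}$ are the transformed profiles, but the nonlinearity is not homogeneous of a single degree across components, so a direct linear Gronwall argument fails; instead I would follow the device from \cite{Cho}: observe that $(\theta y_1^{(v)},\dots,\theta y_k^{(v)})$ is \emph{not} in general a solution of the same system, so the scaling must be absorbed differently. The cleaner route is to use the modified inversion symmetry and monotonicity to transfer the problem to the half-line $t\geq 0$ with Neumann condition $\dot y_i(0)=0$, and then run a uniqueness-for-the-Cauchy-problem argument: if two even decaying solutions of the autonomous system share the same value at $t=0$ (which is what $u_i(0)=\theta v_i(0)$ encodes after the inversion normalization fixes the remaining dilation freedom, using $b\neq 0$ to rule out the translation ambiguity), then by the standard Picard--Lindel\"of theorem applied to the first-order reformulation they coincide for all $t$, hence $u_i\equiv \theta v_i$ after undoing the transformation. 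The scaling factor $\theta$ is handled by noting that the map $u\mapsto \theta u$ does \emph{not} commute with the equation, so in fact one must first argue $\theta=1$ is forced unless $\theta$ is compatible with the homogeneity—here, since every $\alpha_{ij}+\beta_{ij}=p$, the full system \emph{is} $(p-1)$-homogeneous, so $(\theta u_1,\dots,\theta u_k)$ solves $-\div(|x|^{-2a}\nabla(\theta u_i)) = \theta^{p-1}\sum_j \kappa_{ij}|x|^{-bp}|u_i|^{\alpha_{ij}-2}|u_j|^{\beta_{ij}}(\theta u_i)$, which is the original equation only if $\theta^{p-2}=1$, i.e.\ $\theta=1$; thus actually the hypothesis forces $u_i(0)=v_i(0)$ and then the Cauchy uniqueness gives $u_i\equiv v_i$.

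Let me re-examine that last point, since it is the crux. The statement allows general $\theta>0$, so the resolution must be that $(\theta v_1,\dots,\theta v_k)$, while not itself a solution, is compared to $(u_1,\dots,u_k)$ through the ODE by a Wronskian-type identity rather than by being a solution. Concretely, after reduction to the half-line ODE system $Y'' = cY - N(Y)$ with $N$ the (non-homogeneous-degree) nonlinearity, I would consider the quantity $\sum_i (u_i' \theta v_i - \theta v_i' u_i)$ (in transformed variables) and its derivative; matching the $t=0$ data makes this vanish at $t=0$, and one shows it stays zero, forcing the vectors $Y^{(u)}(t)$ and $\theta Y^{(v)}(t)$ to remain parallel, after which the common profile and the normalization conditions \eqref{re7}-type pin everything down. \textbf{The main obstacle} I anticipate is precisely this: because the cross-exponents $(\alpha_{ij},\beta_{ij})$ vary with $i,j$, the system lacks a single scaling symmetry that makes $\theta v$ a solution, so one cannot simply invoke Cauchy uniqueness for $Y^{(u)}$ versus $\theta Y^{(v)}$ directly; the argument must either first establish $\theta=1$ via the global $(p-1)$-homogeneity of the \emph{whole} right-hand side (which does hold, since $\sum_j \kappa_{ij}|u_i|^{\alpha_{ij}-2}|u_j|^{\beta_{ij}}u_i$ is homogeneous of degree $p-1$ in $(u_1,\dots,u_k)$ jointly), thereby reducing to $u_i(0)=v_i(0)$, and then apply Picard--Lindel\"of to the autonomous first-order system for $(Y, Y')$ with the Neumann data at $t=0$ supplied by evenness. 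Handling the regularity needed to write the radial equation as a genuine ODE near $r=0$ (so that "$u_i(0)$" makes sense and the Cauchy problem is well posed there) is a routine but necessary verification drawing on the asymptotic estimates behind Theorem \ref{thm7}(2).
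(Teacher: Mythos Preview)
Your core idea—reduce to a radial ODE and run a Cauchy-uniqueness argument—is the paper's strategy, but your execution routes through the Emden--Fowler picture and that detour creates a genuine gap.

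The paper stays in the variable $r=|x|$. From radial symmetry and continuity at $0$ (Theorem \ref{thm7}(1)(2)) one has $(r^{n-1-2a}u_i')'=-r^{\,n-1-bp}\sum_j\kappa_{ij}u_i^{\alpha_{ij}-1}u_j^{\beta_{ij}}$, and since the right-hand side is integrable near $0$ while $u_i$ is bounded there, necessarily $\lim_{r\to 0}r^{\,n-1-2a}u_i'(r)=0$. Integrating twice from $0$ yields the Volterra identity
\[
u_i(r)=u_i(0)-\int_0^r s^{\,2a+1-n}\!\int_0^s t^{\,n-1-bp}\sum_{j}\kappa_{ij}u_i^{\alpha_{ij}-1}(t)u_j^{\beta_{ij}}(t)\,dt\,ds,
\]
and likewise for $v_i$. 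After reducing to $\theta=1$ via dilation invariance, one sets $\lambda=\sup\{r\ge 0: u_i= v_i \text{ on }[0,r]\ \forall i\}$, subtracts the two integral identities on $[\lambda,\lambda+\epsilon]$, and obtains
\[
\max_i\max_{[\lambda,\lambda+\epsilon]}|u_i-v_i|\le C\epsilon^{\,2a+2-bp}\sum_j\max_{[\lambda,\lambda+\epsilon]}|u_j-v_j|.
\]
Since $2a+2-bp=\dfrac{2(a+1-b)(n-2-2a)}{n-2+2(b-a)}>0$, this contracts for small $\epsilon$ and forces $\lambda=+\infty$. No Emden--Fowler change of variables, no evenness, no inversion.

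The gap in your plan is the Cauchy data. The symmetry point ``$t=0$'' in the Emden--Fowler variable is fixed by the normalizing dilation of Theorem \ref{thm7}(3), and that dilation depends on the solution: there is no a priori reason the profiles of $(u_i)$ and of $(v_i)$ are even about the \emph{same} $t$. The hypothesis $u_i(0)=\theta v_i(0)$ lives at $r=0$, i.e.\ at $t=+\infty$, not at the (solution-dependent) maximum; so you cannot simultaneously impose $\dot y_i^{(u)}(0)=\dot y_i^{(v)}(0)=0$ and matching values at $t=0$. That is exactly why the paper poses the initial-value problem at the singular endpoint $r=0$ via the integral equation above, where the missing ``second'' initial condition $r^{\,n-1-2a}u_i'\to 0$ comes for free from the equation.

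Two smaller corrections. The constant you call $c$ equals $\bigl(\tfrac{n-2-2a}{2}\bigr)^2$ and is positive because $a<\tfrac{n-2}{2}$; the hypothesis $b\neq 0$ has nothing to do with this—its role (together with $a\ge 0$) is to pin the center of radial symmetry at the origin, so that ``$u_i(0)$'' is unambiguous. And your $(p-1)$-homogeneity remark is backwards: it shows that \emph{if} the conclusion $u_i\equiv\theta v_i$ holds then $\theta=1$, but it does not allow you to assume $\theta=1$ from the hypothesis. The paper's reduction uses that the system is invariant under the dilations $w(x)\mapsto\tau^{(n-2-2a)/2}w(\tau x)$, which do send solutions to solutions.
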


The organization of this paper is outlined as follows. In Section \ref{sec2}, we focus on Theorem \ref{thm3}. We introduce a generalized moving plane method along with some regularity results. In Section \ref{sec3}, based on the property of radial symmetry, we transform the system \eqref{sys1} into suitable ODE systems. Dealing with the asymptotic behaviors and modified inversion symmetry becomes easier in this setting (Theorems \ref{thm4} and \ref{thm5}). Section \ref{sec4} is devoted to establishing classification results (Theorems \ref{thm1} and \ref{thm8}). These results are built upon refined ODE estimates. Finally, Section \ref{sec5} is dedicated to the ground states (Theorem \ref{ground} and Theorem \ref{non}). Our approach involves proving a sharp vector-valued Caffarelli-Kohn-Nirenberg inequality (Theorem \ref{vckn}) and making spectrum estimates.

\section{Proof of Theorem \ref{thm3}}\label{sec2}

In this section, we study the radial symmetry property for any positive solution $(u,v)$ to the system \eqref{sys1}. We always assume $a\geq 0$ and $(a,b)\neq (0,0)$ (the case $a=b=0$ can be treated in a similar manner). Let us fix some notations needed for the moving plane method. For any $\lambda\leq 0$, we set $\Sigma_\lambda=\{x\in\R^n\ |\ x_1<\lambda\}$ and $T_\lambda=\{x\in\R^n\ |\ x_1=\lambda\}$. For any $x=(x_1,\ldots,x_n)\in\R^n$, we denote its reflection about $T_\lambda$ by $x_\lambda=(2\lambda-x_1,x_2,\ldots,x_n)$. Following ideas from \cite{Cho}, we define
\begin{equation*}
    u_\lambda(x)=\frac{|x|^a}{|x_\lambda|^a}u(x_\lambda),\quad   v_\lambda(x)=\frac{|x|^a}{|x_\lambda|^a}v(x_\lambda),
\end{equation*}
where $x\in\Sigma_\lambda\backslash\{0_\lambda\}$. It is not hard to compute
\begin{equation}\label{mov1}
    \begin{aligned}
        -\div(|x|^{-2a}\nabla u_\lambda(x))=&-\frac{|x_\lambda|^a}{|x|^a}\div(|x_\lambda|^{-2a}\nabla u(x_\lambda))\\
        &-a(n-2-2a)u(x_\lambda)\frac{|x_\lambda|^{2a+2}-|x|^{2a+2}}{|x|^{3a+2}|x_\lambda|^{3a+2}}\\
        \geq& -\frac{|x_\lambda|^a}{|x|^a}\div(|x_\lambda|^{-2a}\nabla u(x_\lambda))\\
        =&\ \frac{|x_\lambda|^a}{|x|^a}\left(|x_\lambda|^{-bp}u(x_\lambda)^{p-1}+\nu\alpha|x_\lambda|^{-bp}u(x_\lambda)^{\alpha-1}v(x_\lambda)^{\beta}\right)\\
        =&\ \frac{|x|^{(b-a)p}}{|x_\lambda|^{(b-a)p}}\left(|x|^{-bp}u_\lambda(x)^{p-1}+\nu\alpha|x|^{-bp}u_\lambda(x)^{\alpha-1}v_\lambda(x)^{\beta}\right)\\
        \geq&\  |x|^{-bp}u_\lambda(x)^{p-1}+\nu\alpha|x|^{-bp}u_\lambda(x)^{\alpha-1}v_\lambda(x)^{\beta}.
    \end{aligned}
\end{equation}
Similarly, we have
\begin{equation}\label{mov2}
    -\div(|x|^{-2a}\nabla v_\lambda(x))\geq |x|^{-bp}v_\lambda(x)^{p-1}+\nu\beta|x|^{-bp}u_\lambda(x)^{\alpha}v_\lambda(x)^{\beta-1}.
\end{equation}

Next, we give two crucial regularity results.
\begin{proposition}\label{pro1}
    Let $(u,v)\in D_a^{1,2}(\R^n)\times D_a^{1,2}(\R^n)$ be a positive solution to the system \eqref{sys1}. Then we have $(u,v)\in L^{\infty}(\R^n)\times L^{\infty}(\R^n)$.
\end{proposition}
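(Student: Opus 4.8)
The plan is to establish the $L^\infty$ bound via a standard Moser/Brezis--Kato iteration adapted to the Caffarelli--Kohn--Nirenberg setting, treating the two equations of \eqref{sys1} simultaneously. First I would observe that both nonlinearities are of the form $|x|^{-bp}$ times a term that is bounded, near any point, by a constant times $(u^{p-1}+v^{p-1})$: indeed by Young's inequality $|u|^{\alpha-1}|v|^{\beta}\le |u|^{p-1}+|v|^{p-1}$ and similarly for the second equation, since $\alpha+\beta=p$. Hence it suffices to prove the bound for a single function $w\in D^{1,2}_a(\R^n)$ satisfying, in the weak sense, $-\div(|x|^{-2a}\nabla w)\le |x|^{-bp}\,g$ with $0\le g\le C(u^{p-1}+v^{p-1})\in L^{p/(p-1)}_{\mathrm{loc}}$, $w\ge 0$, and $w\in L^{p}(\R^n,|x|^{-bp})$ by the CKN inequality \eqref{ckn}.

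The core step is a weighted Brezis--Kato argument. Fix a point $x_0$ and a radius; for the region away from the origin the weight $|x|^{-2a}$ is comparable to a constant and the classical elliptic bootstrap applies directly, so the only real issue is a neighborhood of $0$. There I would use test functions of the form $\varphi=\eta^2 w\,w_L^{2(s-1)}$ with $w_L=\min\{w,L\}$ a truncation (to stay in the right function space), $\eta$ a cutoff, and $s>1$ to be iterated. Plugging $\varphi$ into the weak formulation and using the CKN inequality \eqref{ckn} applied to $\eta\, w\, w_L^{s-1}$ gives, after absorbing the lower-order terms, an estimate of the shape
\begin{equation*}
\left(\int |x|^{-bp}\big(\eta w w_L^{s-1}\big)^p\right)^{2/p}\le C\, s^2 \int |x|^{-bp}\,(|\nabla\eta|^2+\eta^2 K)\,w^2 w_L^{2(s-1)},
\end{equation*}
where $K=K(x)$ collects the coefficient coming from $g/w$ and lies in $L^{n/(2-2(b-a))}_{\mathrm{loc}}$ (this is the scaling-critical Lebesgue space attached to the operator). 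The smallness of the tail of $K$ in that critical space on small balls lets one absorb the dangerous term; then letting $L\to\infty$ and iterating $s\mapsto s\cdot p/2$ over a shrinking sequence of radii produces, by a Moser iteration, a local $L^\infty$ bound near the origin. Combining with the off-origin bound and the decay at infinity (again from $w\in L^p(|x|^{-bp})$ together with the subcritical local theory) yields $u,v\in L^\infty(\R^n)$.

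I would expect the main obstacle to be the correct bookkeeping of the weights: one must verify that the test functions genuinely belong to $D^{1,2}_a(\R^n)$ (hence the truncation $w_L$), that the weighted CKN inequality \eqref{ckn} is applied to an admissible function, and that the exponent arithmetic of the iteration—driven by the gain factor $p/2>1$, which here equals $n/(n-2+2(b-a))$—closes with the critical integrability $K\in L^{n/(2-2(b-a))}_{\mathrm{loc}}$. A clean way to sidestep some of this is to pass, via the transformation \eqref{trans}, to the Hardy--Sobolev form \eqref{sys2} with $\bar a$ as small as one likes, or even to reduce to the pure-Hardy operator, where the Brezis--Kato scheme is documented in the literature; the Hardy potential is handled by its own sharp inequality and does not affect the iteration exponent. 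Either route gives the same conclusion, and the estimate is uniform enough to be reused later for the asymptotic analysis in Section \ref{sec3}.
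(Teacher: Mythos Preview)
Your approach is essentially the one the paper has in mind: the paper does not give a proof of Proposition~\ref{pro1} at all, but simply states that it ``relies on a standard Moser's iteration scheme'' and refers to \cite[Proposition~3.1]{Esp2} for the details, which is precisely the weighted Brezis--Kato/Moser bootstrap you outline. Your reduction via Young's inequality to a single subsolution inequality, the truncated test functions $\eta^2 w\,w_L^{2(s-1)}$, and the identification of the critical potential space $L^{n/(2-2(b-a))}$ (i.e.\ $L^{p/(p-2)}$ with the appropriate weight) are exactly the ingredients of that argument; the only point worth tightening is the ``decay at infinity'' step, which in this setting is most cleanly obtained by applying the same local iteration to the modified Kelvin transform $\hat u,\hat v$ in \eqref{kelvin}--\eqref{sys4} rather than by an ad hoc argument.
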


\begin{proposition}\label{pro2}
    Suppose $u$ is a positive $C^2$ function in $\Bar{B}_1(0)\backslash \{0\}$ satisfying
    \begin{equation*}
        -\div(|x|^{-2a}\nabla u(x))\geq 0\quad \text{in }\Bar{B}_1(0)\backslash \{0\},
    \end{equation*}
    then there exists a positive constant $K$ such that
    \begin{equation*}
        u(x)\geq K\quad \text{in }\Bar{B}_1(0)\backslash \{0\}.
    \end{equation*}
\end{proposition}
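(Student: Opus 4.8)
\textbf{Proof proposal for Proposition \ref{pro2}.}

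The plan is to establish the lower bound by combining the maximum principle on an annulus with an explicit comparison (barrier) function adapted to the operator $-\div(|x|^{-2a}\nabla\,\cdot\,)$. First I would fix a radius $r_0\in(0,1)$ and look at the spherical minimum $m\defeq\min_{|x|=r_0}u(x)>0$, which is positive since $u$ is positive and continuous on the compact set $\{|x|=r_0\}$. The goal is to propagate this positivity into the punctured ball $\Bar B_{r_0}(0)\setminus\{0\}$; once that is done, the region $r_0\le|x|\le1$ is handled by the same compactness argument (again $u$ is continuous and positive there, hence bounded below by a positive constant). So everything reduces to the region near the origin.

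Next I would construct a radial subsolution. Writing $\Phi(x)=\phi(|x|)$ and computing $-\div(|x|^{-2a}\nabla\Phi)$, one finds that the fundamental-type solutions of the homogeneous equation $-\div(|x|^{-2a}\nabla\Phi)=0$ in the radial variable are spanned by the constants and by $|x|^{-(n-2-2a)}$ (note $n-2-2a>0$ since $a<\frac{n-2}{2}$). Both are supersolutions that are actually solutions; I want a \emph{positive} comparison function $\Psi$ on $r_0\le|x|\le 1$, say something like $\Psi(x)=m$ (the constant), or more robustly a function that is a strict subsolution. Since $-\div(|x|^{-2a}\nabla u)\ge0$, the function $u$ is a supersolution; comparing $u$ with the harmonic-type function that equals $m$ on $\{|x|=r_0\}$ and equals $0$ on $\{|x|=1\}$ — explicitly
\begin{equation*}
    \Psi(x)=m\,\frac{|x|^{-(n-2-2a)}-1}{r_0^{-(n-2-2a)}-1},
\end{equation*}
which solves $-\div(|x|^{-2a}\nabla\Psi)=0$ in the annulus $\{r_0<|x|<1\}$ with $\Psi=m$ on the inner boundary and $\Psi=0$ on the outer one — gives, by the weak maximum principle for this operator (valid since $|x|^{-2a}$ is smooth and positive away from the origin, so the equation is uniformly elliptic on every annulus), that $u\ge\Psi$ on $\{r_0\le|x|\le1\}$. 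Letting $r_0\to0$ is \emph{not} what I want here; instead I keep $r_0$ fixed and handle the inner part $\{0<|x|\le r_0\}$ separately: there $u$ is a positive supersolution on a punctured ball, and I compare it on each annulus $\{\eps\le|x|\le r_0\}$ with the function equal to $m$ on $|x|=r_0$ and to $0$ on $|x|=\eps$; taking $\eps\to0$ and using that the coefficient of the "fundamental solution" piece stays bounded, one concludes $u\ge c\,m$ on $\{0<|x|\le r_0\}$ for a constant $c=c(n,a,r_0)>0$ — here the key point is that a positive superharmonic-type function on a punctured ball cannot dip below a fixed fraction of its boundary minimum, because the only obstruction would be a negative multiple of $|x|^{-(n-2-2a)}$, which is incompatible with positivity. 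Combining the two regions yields the desired $K>0$.

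The main obstacle, and the step requiring the most care, is the behaviour at the singularity $x=0$: the operator degenerates/blows up there, so the classical maximum principle does not apply directly on $\Bar B_1(0)$, and one must argue on annuli $\{\eps\le|x|\le1\}$ and pass to the limit $\eps\to0$. The technical content is showing that the comparison constant does not deteriorate as $\eps\to0$; this is exactly where the explicit form of the radial homogeneous solutions (constants and $|x|^{-(n-2-2a)}$) is used, together with the sign constraint coming from positivity of $u$. Alternatively, one could phrase the whole argument via the change of variables $\Bar u(x)=|x|^{-a}\tilde u$ or the logarithmic/Emden–Fowler substitution $|x|=e^{-t}$ used elsewhere in the paper, which transforms the operator into one with constant coefficients plus a lower-order term and makes the "no negative fundamental-solution component" argument transparent; but the annulus-plus-barrier approach above is the most self-contained.
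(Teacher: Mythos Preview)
The paper does not actually prove this proposition: it simply cites \cite[Lemma~4.2]{Cho} and omits the argument entirely. Your barrier-on-annuli approach is correct and is the standard way to establish such a lower bound. Two minor points. First, the paragraph constructing $\Psi$ on the \emph{outer} annulus $\{r_0<|x|<1\}$ is a detour you yourself abandon; that region is already handled by compactness, so you can delete it. Second, on the inner region the explicit comparison function on $\{\eps\le|x|\le r_0\}$ with value $m$ at $|x|=r_0$ and $0$ at $|x|=\eps$ is
\[
\Psi_\eps(x)=m\,\frac{\eps^{-(n-2-2a)}-|x|^{-(n-2-2a)}}{\eps^{-(n-2-2a)}-r_0^{-(n-2-2a)}},
\]
and for each fixed $x$ with $0<|x|\le r_0$ one has $\Psi_\eps(x)\to m$ as $\eps\to0$; hence $u\ge m$ throughout $\{0<|x|\le r_0\}$, so your constant $c$ is in fact $1$. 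With these cosmetic cleanups the proof is complete.
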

The proof of Proposition \ref{pro1} relies on a standard Moser's iteration scheme, with detailed arguments available in \cite[Proposition 3.1]{Esp2}. A more robust version of Proposition \ref{pro2} was provided in \cite[Lemma 4.2]{Cho}. The proofs for both propositions are omitted here.

Note that by defining  $\hat{u}$ and $\hat{v}$ as the modified Kelvin transforms of $u$ and $v$ respectively, according to
\begin{equation}\label{kelvin}
    \hat{u}(x)=|x|^{2+2a-n}u\left(\frac{x}{|x|^2}\right),\quad \hat{v}(x)=|x|^{2+2a-n}v\left(\frac{x}{|x|^2}\right),
\end{equation}
the pair $(\hat{u},\hat{v})$ satisfies the system \eqref{sys1} in $\R^n$:
\begin{equation}\label{sys4}
    \begin{cases}
        -\div(|x|^{-2a}\nabla \hat{u})=|x|^{-bp}\hat{u}^{p-1}+\nu\alpha|x|^{-bp}\hat{u}^{\alpha-1}\hat{v}^{\beta}&\text{in }\R^n\\
        -\div(|x|^{-2a}\nabla \hat{v})=|x|^{-bp}\hat{v}^{p-1}+\nu\beta|x|^{-bp}\hat{u}^{\alpha}\hat{v}^{\beta-1}&\text{in }\R^n\\
        \hat{u},\hat{v}\in D_{a}^{1,2}(\R^n),\quad \hat{u},\hat{v}>0\ \text{in }\R^n\backslash\{0\}.
    \end{cases}
\end{equation}
It is also evident that $\hat{u}_\lambda$ and $\hat{v}_\lambda$ satisfy similar inequalities as in \eqref{mov1} and \eqref{mov2} respectively:
\begin{equation}\label{mov3}
    \begin{aligned}
        -\div(|x|^{-2a}\nabla \hat{u}_\lambda)\geq&|x|^{-bp}\hat{u}^{p-1}+\nu\alpha|x|^{-bp}\hat{u}_\lambda^{\alpha-1}\hat{v}_\lambda^{\beta}\quad\text{in }\Sigma_\lambda\backslash\{0_\lambda\},\\
        -\div(|x|^{-2a}\nabla \hat{v}_\lambda)\geq&|x|^{-bp}\hat{v}_\lambda^{p-1}+\nu\beta|x|^{-bp}\hat{u}_\lambda^{\alpha}\hat{v}_\lambda^{\beta-1}\quad\text{in }\Sigma_\lambda\backslash\{0_\lambda\}.
    \end{aligned}
\end{equation}
Moreover, from Propositions \ref{pro1} and \ref{pro2}, there exist positive constants $c_u,C_u,c_v,C_v,R_0$ such that
\begin{equation}\label{aaa}
    \frac{c_u}{|x|^{n-2-2a}}\leq\hat{u}(x)\leq \frac{C_u}{|x|^{n-2-2a}},\quad \frac{c_v}{|x|^{n-2-2a}}\leq\hat{v}(x)\leq \frac{C_v}{|x|^{n-2-2a}},
\end{equation}
whenever $|x|\geq R_0$.

Now we are ready to present the proof of Theorem \ref{thm3}. Our arguments are motivated by \cite[Theorem 1.5]{Esp2}. It is worth mentioning that in \cite{Esp2}, in order to derive radial symmetry, the authors applied the moving plane method to a suitable translated problem. In contrast, we utilize the generalized moving plane method from \cite{Cho} to address the original problem directly. Our method appears to be somewhat simpler.

\begin{proof}[Proof of Theorem \ref{thm3}]
     First, note that it suffices to show that $\hat{u}$ and $\hat{v}$ are radially symmetric about the origin. Our approach relies on several integral estimates. In the following computations, we always use $C$ to denote a constant that depends only on $n,a,b,\nu,K,c_u,C_u,c_v,C_v,R_0$. The constant $C$ may vary from line to line. We define
     \begin{equation*}
         \xi_\lambda(x):=\hat{u}(x)-\hat{u}_\lambda(x),\quad \zeta_\lambda(x):=\hat{v}(x)-\hat{v}_\lambda(x),
     \end{equation*}
     where $x\in\Sigma_\lambda\backslash\{0_\lambda\}$ and $\lambda\leq0$. We split our proof into three steps.
     \\[8pt]
     \emph{Step 1: There exists a constant $M<0$ such that $\xi_\lambda(x)\leq 0$ and $\zeta_\lambda(x)\leq 0$ for $\lambda\leq M$ and $x\in\Sigma_\lambda\backslash\{0_\lambda\}$}.

     Assume $\lambda<-2R_0$ and fix constants $0<\epsilon<1$ (small) and $R>-2\lambda$ (large). Let us take two cut-off functions $\psi_\epsilon$ and $\eta_R$ in $C_c^\infty(\R^n;[0,1])$ such that $\psi_\epsilon=0$ in $B_\epsilon(0_\lambda)$, $\psi_\epsilon=1$ outside $B_{2\epsilon}(0_\lambda)$, and $|\nabla\psi_\epsilon|\leq C\epsilon^{-1}$. Additionally, $\eta_R=1$ in $B_R(0)$, $\eta_R=0$ outside $B_{2R}(0)$ and $|\nabla \eta_R|\leq CR^{-1}$. Testing $(\xi_\lambda^+\psi_\epsilon^2\eta_R^2,\zeta_\lambda^+\psi_\epsilon^2\eta_R^2)$ in the system \eqref{sys4} and the inequalities \eqref{mov3}, and then subtracting them, we obtain
     \begin{equation*}
         \begin{aligned}
             \int_{\Sigma_\lambda}|x|^{-2a}\nabla\xi_\lambda\cdot \nabla(\xi_\lambda^+\psi_\epsilon^2\eta_R^2)\leq& \int_{\Sigma_\lambda}|x|^{-bp}(\hat{u}^{p-1}-\hat{u}_\lambda^{p-1})\xi_\lambda^+\psi_\epsilon^2\eta_R^2\\
             &+\nu\alpha\int_{\Sigma_\lambda}|x|^{-bp}(\hat{u}^{\alpha-1}\hat{v}^\beta-\hat{u}_\lambda^{\alpha-1}\hat{v}_\lambda^\beta)\xi_\lambda^+\psi_\epsilon^2\eta_R^2,\\
             \int_{\Sigma_\lambda}|x|^{-2a}\nabla\zeta_\lambda\cdot \nabla(\zeta_\lambda^+\psi_\epsilon^2\eta_R^2)\leq& \int_{\Sigma_\lambda}|x|^{-bp}(\hat{v}^{p-1}-\hat{v}_\lambda^{p-1})\zeta_\lambda^+\psi_\epsilon^2\eta_R^2\\
             &+\nu\beta\int_{\Sigma_\lambda}|x|^{-bp}(\hat{u}^{\alpha}\hat{v}^{\beta-1}-\hat{u}_\lambda^{\alpha}\hat{v}_\lambda^{\beta-1})\zeta_\lambda^+\psi_\epsilon^2\eta_R^2.
         \end{aligned}
     \end{equation*}
     This leads us to the following inequalities:
     \begin{align}\label{asd}
         \int_{\Sigma_\lambda}|x|^{-2a}|\nabla\xi_\lambda^+|^2\psi_\epsilon^2\eta_R^2\leq&-2\int_{\Sigma_\lambda}|x|^{-2a}(\nabla\xi_\lambda^+\cdot\nabla\psi_\epsilon)\xi_\lambda^+\psi_\epsilon\eta_R^2\\
         &-2\int_{\Sigma_\lambda}|x|^{-2a}(\nabla\xi_\lambda^+\cdot\nabla\eta_R)\xi_\lambda^+\psi_\epsilon^2\eta_R\nonumber\\
         &+\int_{\Sigma_\lambda}|x|^{-bp}(\hat{u}^{p-1}-\hat{u}_\lambda^{p-1})\xi_\lambda^+\psi_\epsilon^2\eta_R^2\nonumber\\
         &+\nu\alpha\int_{\Sigma_\lambda}|x|^{-bp}(\hat{u}^{\alpha-1}\hat{v}^\beta-\hat{u}_\lambda^{\alpha-1}\hat{v}_\lambda^\beta)\xi_\lambda^+\psi_\epsilon^2\eta_R^2\nonumber\\
         =&:\I_1+\I_2+\I_3+\I_4.\nonumber
     \end{align}
     And similarly,
     \begin{align}\label{asd2}
       \int_{\Sigma_\lambda}|x|^{-2a}|\nabla\zeta_\lambda^+|^2\psi_\epsilon^2\eta_R^2\leq&-2\int_{\Sigma_\lambda}|x|^{-2a}(\nabla\zeta_\lambda^+\cdot\nabla\psi_\epsilon)\zeta_\lambda^+\psi_\epsilon\eta_R^2\nonumber\\
       &-2\int_{\Sigma_\lambda}|x|^{-2a}(\nabla\zeta_\lambda^+\cdot\nabla\eta_R)\zeta_\lambda^+\psi_\epsilon^2\eta_R\nonumber\\
       &+\int_{\Sigma_\lambda}|x|^{-bp}(\hat{v}^{p-1}-\hat{v}_\lambda^{p-1})\zeta_\lambda^+\psi_\epsilon^2\eta_R^2\\
       &+\nu\beta\int_{\Sigma_\lambda}|x|^{-bp}(\hat{u}^{\alpha}\hat{v}^{\beta-1}-\hat{u}_\lambda^{\alpha}\hat{v}_\lambda^{\beta-1})\zeta_\lambda^+\psi_\epsilon^2\eta_R^2\nonumber\\
       =&:\J_1+\J_2+\J_3+\J_4.\nonumber
     \end{align}
     In the following, we aim to estimate $\I_i$ and $\J_i$ for $1\leq i\leq 4$ in turn. For $\I_1$, using Young's inequality, Proposition \ref{pro1}, and the fact that $0\leq \xi_\lambda^+\leq \hat{u}$, we have:
     \begin{equation}\label{I_1}
         \begin{aligned}
             \I_1&\leq \frac{1}{4}\int_{\Sigma_\lambda}|x|^{-2a}|\nabla\xi_\lambda^+|^2\psi_\epsilon^2\eta_R^2+4\int_{\Sigma_\lambda}|x|^{-2a}|\nabla\psi_\epsilon|^2(\xi_\lambda^+)^2\eta_R^2\\
             &\leq \frac{1}{4}\int_{\Sigma_\lambda}|x|^{-2a}|\nabla\xi_\lambda^+|^2\psi_\epsilon^2\eta_R^2+C\norm{\hat{u}}_{L^\infty(\Sigma_\lambda)}^2\int_{\Sigma_\lambda}|x|^{-2a}|\nabla\psi_\epsilon|^2\\
             &\leq \frac{1}{4}\int_{\Sigma_\lambda}|x|^{-2a}|\nabla\xi_\lambda^+|^2\psi_\epsilon^2\eta_R^2+C\epsilon^{n-2-2a}\norm{\hat{u}}_{L^\infty(\Sigma_\lambda)}^2.
         \end{aligned}
     \end{equation}
     Similarly, for $\J_1$, we have:
     \begin{equation}\label{J_1}
         \J_1\leq \frac{1}{4}\int_{\Sigma_\lambda}|x|^{-2a}|\nabla\zeta_\lambda^+|^2\psi_\epsilon^2\eta_R^2+C\epsilon^{n-2-2a}\norm{\hat{v}}_{L^\infty(\Sigma_\lambda)}^2.
     \end{equation}
     Furthermore, by Young's inequality and the H\"older inequality, we can show that
     \begin{align}\label{I_2}
             \I_2&\leq \frac{1}{4}\int_{\Sigma_\lambda}|x|^{-2a}|\nabla\xi_\lambda^+|^2\psi_\epsilon^2\eta_R^2+4\int_{\Sigma_\lambda}|x|^{-2a}|\nabla\eta_R|^2(\xi_\lambda^+)^2\psi_\epsilon^2\\
             &\leq \frac{1}{4}\int_{\Sigma_\lambda}|x|^{-2a}|\nabla\xi_\lambda^+|^2\psi_\epsilon^2\eta_R^2+4\norm{|x|^{-a}\hat{u}}^2_{L^{2^*}\left(\Sigma_\lambda\cap \left(B_{2R}\backslash B_R\right)\right)}\left(\int_{\Sigma_\lambda}|\nabla\eta_R|^n\right)^\frac{2}{n}\nonumber\\
             &\leq \frac{1}{4}\int_{\Sigma_\lambda}|x|^{-2a}|\nabla\xi_\lambda^+|^2\psi_\epsilon^2\eta_R^2+C\norm{|x|^{-a}\hat{u}}^2_{L^{2^*}\left(\Sigma_\lambda\cap \left(B_{2R}\backslash B_R\right)\right)}.\nonumber
        \end{align}
     Analogously, we deduce that
     \begin{equation}
         \J_2\leq \frac{1}{4}\int_{\Sigma_\lambda}|x|^{-2a}|\nabla\zeta_\lambda^+|^2\psi_\epsilon^2\eta_R^2+C\norm{|x|^{-a}\hat{v}}^2_{L^{2^*}\left(\Sigma_\lambda\cap \left(B_{2R}\backslash B_R\right)\right)}.
     \end{equation}
     The estimates for $\I_3$ and $\J_3$ follow similarly:
     \begin{equation}\label{I_3}
         \begin{aligned}
             \I_3\leq&\; C\int_{\Sigma_\lambda}|x|^{-bp}\hat{u}^{p-2}(\xi_\lambda^+)^2\psi_\epsilon^2\eta_R^2\\
             \leq&\; C\norm{|x|^{-b}\hat{u}}_{L^{p}(\Sigma_\lambda\cap B_{2R})}^{p-2}\norm{|x|^{-b}\xi^+_\lambda\psi_\epsilon\eta_R}_{L^{p}(\Sigma_\lambda)}^{2}\\
             \leq&\; C\norm{|x|^{-b}\hat{u}}^{p-2}_{L^{p}(\Sigma_\lambda\cap B_{2R})}\int_{\Sigma_\lambda}|x|^{-2a}|\nabla (\xi^+_\lambda\psi_\epsilon\eta_R)|^2\\
             \leq& \;C\norm{|x|^{-b}\hat{u}}^{p-2}_{L^{p}(\Sigma_\lambda\cap B_{2R})}\int_{\Sigma_\lambda}|x|^{-2a}\left(|\nabla\xi_\lambda^+|^2\psi_\epsilon^2\eta_R^2+|\nabla\psi_\epsilon|^2(\xi_\lambda^+)^2\eta_R^2+|\nabla\eta_R|^2(\xi_\lambda^+)^2\psi_\epsilon^2\right)\\
             \leq&\; C\norm{|x|^{-b}\hat{u}}^{p-2}_{L^{p}(\Sigma_\lambda\cap B_{2R})}\left(\int_{\Sigma_\lambda}|x|^{-2a}|\nabla\xi_\lambda^+|^2\psi_\epsilon^2\eta_R^2+\norm{|x|^{-a}\hat{u}}^2_{L^{2^*}\left(\Sigma_\lambda\cap \left(B_{2R}\backslash B_R\right)\right)}\right)\\
             &+C\epsilon^{n-2-2a}\norm{|x|^{-b}\hat{u}}^{p-2}_{L^{p}(\Sigma_\lambda\cap B_{2R})}\norm{\hat{u}}_{L^\infty(\Sigma_\lambda)}^2.
         \end{aligned}
     \end{equation}
     \begin{align}\label{J_3}
             \J_3\leq&\; C\norm{|x|^{-b}\hat{v}}^{p-2}_{L^{p}(\Sigma_\lambda\cap B_{2R})}\left(\int_{\Sigma_\lambda}|x|^{-2a}|\nabla\zeta_\lambda^+|^2\psi_\epsilon^2\eta_R^2+\norm{|x|^{-a}\hat{v}}^2_{L^{2^*}\left(\Sigma_\lambda\cap \left(B_{2R}\backslash B_R\right)\right)}\right)\\
             &+C\epsilon^{n-2-2a}\norm{|x|^{-b}\hat{v}}^{p-2}_{L^{p}(\Sigma_\lambda\cap B_{2R})}\norm{\hat{v}}_{L^\infty(\Sigma_\lambda)}^2.\nonumber
     \end{align}
Finally, to evaluate $\I_4$ and $\J_4$, we need the following two estimates:
\begin{align*}
    \hat{u}^{\alpha-1}\hat{v}^\beta-\hat{u}_\lambda^{\alpha-1}\hat{v}_\lambda^\beta=&\;(\hat{u}^{\alpha-1}-\hat{u}_\lambda^{\alpha-1})\hat{v}^\beta+\hat{u}_\lambda^{\alpha-1}(\hat{v}^\beta-\hat{v}_\lambda^\beta)\\
        \leq&\; C|x|^{-(n-2-2a)(p-2)}(\xi_\lambda^++\zeta_\lambda^+)\\
        \leq&\; C\min\{\hat{u},\hat{v}\}^{p-2}(\xi_\lambda^++\zeta_\lambda^+),\\
        \hat{u}^{\alpha}\hat{v}^{\beta-1}-\hat{u}_\lambda^{\alpha}\hat{v}_\lambda^{\beta-1}\leq&\; C\min\{\hat{u},\hat{v}\}^{p-2}(\xi_\lambda^++\zeta_\lambda^+),
\end{align*}
which are guaranteed by $\lambda<-2R_0$, the mean value theorem, and the estimates in \eqref{aaa}. Arguing as in \eqref{I_3} and \eqref{J_3}, we can derive:
\begin{align}\label{I_4}
        \I_4\leq& \;C\int_{\Sigma_\lambda}|x|^{-bp}\min\{\hat{u},\hat{v}\}^{p-2}(\xi_\lambda^+)^2\psi_\epsilon^2\eta_R^2+C\int_{\Sigma_\lambda}|x|^{-bp}\min\{\hat{u},\hat{v}\}^{p-2}\xi_\lambda^+\zeta_\lambda^+\psi_\epsilon^2\eta_R^2\nonumber\\
        \leq&\; C\int_{\Sigma_\lambda}|x|^{-bp}\hat{u}^{p-2}(\xi_\lambda^+)^2\psi_\epsilon^2\eta_R^2+C\int_{\Sigma_\lambda}|x|^{-bp}\hat{v}^{p-2}(\zeta_\lambda^+)^2\psi_\epsilon^2\eta_R^2\nonumber\\
        \leq&\; C\norm{|x|^{-b}\hat{u}}^{p-2}_{L^{p}(\Sigma_\lambda\cap B_{2R})}\left(\int_{\Sigma_\lambda}|x|^{-2a}|\nabla\xi_\lambda^+|^2\psi_\epsilon^2\eta_R^2+\norm{|x|^{-a}\hat{u}}^2_{L^{2^*}\left(\Sigma_\lambda\cap \left(B_{2R}\backslash B_R\right)\right)}\right)\\
        &+ C\norm{|x|^{-b}\hat{v}}^{p-2}_{L^{p}(\Sigma_\lambda\cap B_{2R})}\left(\int_{\Sigma_\lambda}|x|^{-2a}|\nabla\zeta_\lambda^+|^2\psi_\epsilon^2\eta_R^2+\norm{|x|^{-a}\hat{v}}^2_{L^{2^*}\left(\Sigma_\lambda\cap \left(B_{2R}\backslash B_R\right)\right)}\right)\nonumber\\
        &+C\epsilon^{n-2-2a}\left(\norm{|x|^{-b}\hat{u}}^{p-2}_{L^{p}(\Sigma_\lambda\cap B_{2R})}\norm{\hat{u}}_{L^\infty(\Sigma_\lambda)}^2+\norm{|x|^{-b}\hat{v}}^{p-2}_{L^{p}(\Sigma_\lambda\cap B_{2R})}\norm{\hat{v}}_{L^\infty(\Sigma_\lambda)}^2\right).\nonumber
\end{align}
Similarly, we can derive:
\begin{align}\label{J_4}
        \J_4\leq&\; C\norm{|x|^{-b}\hat{u}}^{p-2}_{L^{p}(\Sigma_\lambda\cap B_{2R})}\left(\int_{\Sigma_\lambda}|x|^{-2a}|\nabla\xi_\lambda^+|^2\psi_\epsilon^2\eta_R^2+\norm{|x|^{-a}\hat{u}}^2_{L^{2^*}\left(\Sigma_\lambda\cap \left(B_{2R}\backslash B_R\right)\right)}\right)\\
        &+ C\norm{|x|^{-b}\hat{v}}^{p-2}_{L^{p}(\Sigma_\lambda\cap B_{2R})}\left(\int_{\Sigma_\lambda}|x|^{-2a}|\nabla\zeta_\lambda^+|^2\psi_\epsilon^2\eta_R^2+\norm{|x|^{-a}\hat{v}}^2_{L^{2^*}\left(\Sigma_\lambda\cap \left(B_{2R}\backslash B_R\right)\right)}\right)\nonumber\\
        &+C\epsilon^{n-2-2a}\left(\norm{|x|^{-b}\hat{u}}^{p-2}_{L^{p}(\Sigma_\lambda\cap B_{2R})}\norm{\hat{u}}_{L^\infty(\Sigma_\lambda)}^2+\norm{|x|^{-b}\hat{v}}^{p-2}_{L^{p}(\Sigma_\lambda\cap B_{2R})}\norm{\hat{v}}_{L^\infty(\Sigma_\lambda)}^2\right).\nonumber
   \end{align}
Combining the estimates \eqref{I_1}, \eqref{I_2}, \eqref{I_3}, \eqref{I_4}, and letting $\epsilon\rightarrow 0$, $R\rightarrow\infty$, we find that \eqref{asd} reduces to
\begin{equation}\label{asdd}
\begin{aligned}
    \left(\frac{1}{2}- C\norm{|x|^{-b}\hat{u}}^{p-2}_{L^{p}(\Sigma_\lambda)}\right)\int_{\Sigma_\lambda}|x|^{-2a}|\nabla\xi_\lambda^+|^2\leq  C\norm{|x|^{-b}\hat{v}}^{p-2}_{L^{p}(\Sigma_\lambda)}\int_{\Sigma_\lambda}|x|^{-2a}|\nabla\zeta_\lambda^+|^2.
\end{aligned}
\end{equation}
Similarly, \eqref{asd2} reduces to
\begin{equation}\label{asdd2}
   \left(\frac{1}{2}- C\norm{|x|^{-b}\hat{v}}^{p-2}_{L^{p}(\Sigma_\lambda)}\right)\int_{\Sigma_\lambda}|x|^{-2a}|\nabla\zeta_\lambda^+|^2\leq  C\norm{|x|^{-b}\hat{u}}^{p-2}_{L^{p}(\Sigma_\lambda)}\int_{\Sigma_\lambda}|x|^{-2a}|\nabla\xi_\lambda^+|^2.
\end{equation}
Note that, as $\lambda$ tends to $-\infty$, $\norm{|x|^{-b}\hat{u}}^{p-2}_{L^{p}(\Sigma_\lambda)}$ and $\norm{|x|^{-b}\hat{v}}^{p-2}_{L^{p}(\Sigma_\lambda)}$ approach zero. Hence, whenever $\lambda$ is sufficiently negative, we have:
\begin{equation}\label{asdd3}
    C\norm{|x|^{-b}\hat{u}}^{p-2}_{L^{p}(\Sigma_\lambda)}\leq\frac{1}{8},\quad C\norm{|x|^{-b}\hat{v}}^{p-2}_{L^{p}(\Sigma_\lambda)}\leq\frac{1}{8}.
\end{equation}
The combination of \eqref{asdd}, \eqref{asdd2}, and \eqref{asdd3} indicates that
\begin{equation*}
    \int_{\Sigma_\lambda}|x|^{-2a}|\nabla\xi_\lambda^+|^2+\int_{\Sigma_\lambda}|x|^{-2a}|\nabla\zeta_\lambda^+|^2\leq 0.
\end{equation*}
Therefore, $\xi_\lambda^+\equiv0$ and $\zeta_\lambda^+\equiv0$, i.e., $\xi_\lambda\leq 0$ and $\zeta_\lambda\leq 0$.
\\[8pt]
\emph{Step 2: $\xi_0(x)\leq0$ and $ \zeta_0(x)\leq0$ for any $x\in\Sigma_0$.}

Set
\begin{equation*}
    \lambda_0=\sup\{a<0\ |\ \xi_\lambda(x)\leq0
    \text{ and }\zeta_\lambda(x)\leq 0\text{ for any }\lambda\leq a\text{ and }x\in\Sigma_\lambda\backslash\{0_\lambda\}\}.
\end{equation*}
We aim to demonstrate that $\lambda_0=0$. If $\lambda_0<0$, by the maximum principle, we have $\xi_{\lambda_0}<0$ and $\zeta_{\lambda_0}<0$ in $\Sigma_{\lambda_0}\backslash\{0_{\lambda_0}\}$. To derive a contradiction, we first prove that for any $0<\delta\ll 1<R_1<\infty$, there exists $\epsilon_0>0$ (possibly dependent on $\delta$ and $R_1$) such that
\begin{equation}\label{dom}
    \{\xi_\lambda>0\}\cup\{\zeta_\lambda>0\}\subset \Omega_{\delta,R_1}:=\left(\Sigma_{\lambda_0}\backslash \Bar{B}_{R_1}\right)\cup B_\delta(0_{\lambda_0})
\end{equation}
for any $\lambda_0\leq\lambda\leq\lambda_0+\epsilon_0$. Assume the contrary. Without loss of generality, we can assume the existence of a sequence of numbers $\{\tau_m\}_m$ converging to $\lambda_0$ and a sequence of points $P_m\in\Sigma_{\tau_m}\backslash\Omega_{\delta,R}$ such that $\xi_{\tau_m}(P_m)>0$. Up to a subsequence, we also assume $P_m\rightarrow P\in \Bar{\Sigma}_{\lambda_0}\backslash\Omega_{\delta,R}$. By continuity, $\xi_{\lambda_0}(P)\geq 0$, indicating that $P$ must lie on the hyperplane $T_{\lambda_0}$. The Hopf boundary lemma then implies $\frac{\partial \xi_{\lambda_0}}{\partial x_1}(P)<0$. By continuity once more, for any $(\lambda,P')$ near $(\lambda_0,P)$, it holds that $\frac{\partial \xi_{\lambda}}{\partial x_1}(P')<0$. Now we can derive a contradiction using the facts that $\xi_{\tau_m}(P_m)>0,\xi_{\tau_m}|_{T_{\tau_m}}=0$, and the mean value theorem.

In the following, assuming that \eqref{dom} holds, it suffices to check that $\xi_{\lambda}^+\equiv0$ and $\zeta_{\lambda}^+\equiv0$ in $\Omega_{\delta,R_1}$ for certain values of $\delta,R_1$, and $\epsilon_0$, and for any $\lambda_0\leq \lambda\leq\lambda_0+\epsilon_0$. Here we can argue as in \emph{Step 1}: We test the function $(\xi_\lambda^+\psi_\epsilon^2\eta_R^2,\zeta_\lambda^+\psi_\epsilon^2\eta_R^2)$ in $\Omega_{\delta,R_1}$, applying basic inequalities, making integral estimates, and finally deducing that
\begin{equation*}\int_{\Omega_{\delta,R_1}}|x|^{-2a}|\nabla\xi_\lambda^+|^2+\int_{\Omega_{\delta,R_1}}|x|^{-2a}|\nabla\zeta_\lambda^+|^2\leq 0.
\end{equation*}
The only difference here is that we cannot allow $\lambda$ to be sufficiently negative. However, it is not hard to see that by letting $\delta$ sufficiently small and $R_1$ sufficiently large, all the arguments in \emph{Step 1} still hold. Thus, we conclude that $\xi_\lambda\leq 0$ and $\zeta_\lambda\leq 0$ for any $\lambda<\lambda_0+\epsilon_0$, leading to a contradiction!
\\[8pt]
\emph{Step 3: $\hat{u}$ and $\hat{v}$ are radially symmetric about the origin.}

\emph{Step 2} tells us that $\hat{u}(x)\leq \hat{u}(x_\lambda)$ when $x_1\leq 0$. If we perform the moving plane method in the opposite direction, we can derive the reverse inequality $\hat{u}(x)\geq \hat{u}(x_\lambda)$ when $x_1\leq 0$, which means that $\hat{u}$ is symmetric about the plane $\{x_1=0\}$. Since the choice of directions does not affect our arguments, we conclude that $\hat{u}$ must be radially symmetric about the origin. Consequently, $\hat{v}$ must also exhibit radial symmetry about the origin.
\end{proof}

\section{Proofs of Theorems \ref{thm4} and \ref{thm5}}\label{sec3}
In this section, we are devoted to investigating the asymptotic behaviors and the modified inversion symmetry for any positive solution $(u,v)$ to the system \eqref{sys1}. In both proofs, it is necessary to transform the system \eqref{sys1} into a certain equivalent ODE system.
\begin{proof}[Proof of Theorem \ref{thm4}]
    Since $u$ and $v$ are radially symmetric about the origin, we can reformulate the system \eqref{sys1} as follows
    \begin{equation}\label{ode1}
        \begin{cases}
            (r^{n-1-2a}u')'+r^{n-1-bp}(u^{p-1}+\nu\alpha u^{\alpha-1}v^{\beta})=0&\text{in }(0,+\infty)\\
            (r^{n-1-2a}v')'+r^{n-1-bp}(v^{p-1}+\nu\beta u^{\alpha}v^{\beta-1})=0&\text{in }(0,+\infty)\\
            u,v>0\quad\text{in }(0,+\infty).
        \end{cases}
    \end{equation}
    From the above system, we observe that $r^{n-1-2a}u'$ is strictly decreasing in $(0,+\infty)$, which implies that $u'$ must have only one sign near 0. Consequently,  $u(r)$ is monotonic near $0$. Utilizing Proposition \ref{pro1} and Proposition \ref{pro2}, we conclude that the limit $\lim\limits_{r\rightarrow0^+}u(r)=:u_0$ exists, and $u_0$ is positive. Applying the same arguments to $\hat{u}$ defined in \eqref{kelvin}, we find that the limit $\lim\limits_{r\rightarrow\infty}u(r)r^{n-2-2a}=\lim\limits_{r\rightarrow0}\hat{u}(r)=:u_\infty$ also exists, and $u_\infty$ is positive. Analogous results hold for $v$.
\end{proof}
Before giving the proof of Theorem \ref{thm5}, we introduce the Emden-Fowler transformation:
\begin{equation}\label{emd}
    w(r,\theta)=r^{-\frac{n-2-2a}{2}}\varphi_w(t,\theta)\quad\text{with }r=|x|,\ t=-\ln(r),\ \theta\in\S^{n-1}.
\end{equation}
The correspondence between $w$ and $\varphi_w$ establishes an isometry between $D_a^{1,2}(\R^n)$ and $H^1(\R\times\S^{n-1})$. In our scenario, since $u$ and $v$ are radially symmetric about the origin, it follows that $\varphi_u$ and $\varphi_v$ depend only on $t$. The system \eqref{sys1} can thus be transformed into the following:
\begin{equation}\label{ode2}
    \begin{cases}
        -\varphi_u''+\gamma\varphi_u=\varphi_u^{p-1}+\nu\alpha\varphi_u^{\alpha-1}\varphi_v^\beta&\text{in }\R\\
        -\varphi_v''+\gamma\varphi_v=\varphi_v^{p-1}+\nu\beta\varphi_u^{\alpha}\varphi_v^{\beta-1}&\text{in }\R\\
        \varphi_u,\varphi_v\in H^1(\R),\quad \varphi_u,\varphi_v>0\ \text{in }\R,
    \end{cases}
\end{equation}
where $n\geq 3,0\leq a<\frac{n-2}{2},a\leq b<a+1,p=\frac{2n}{n-2+2(b-a)},\gamma=\left(\frac{n-2-2a}{2}\right)^2,\nu>0$, and $\alpha>1,\beta>1$ satisfying $\alpha+\beta=p$. Since the modified inversions and dilations in $\R^n$ correspond to reflections and translations in $\R$, to prove Theorem \ref{thm5}, it suffices to address the symmetry and monotonicity properties of $\varphi_u$ and $\varphi_v$. By applying the moving plane method, we follow a similar argument as in the proof of Theorem \ref{thm3}.
\begin{proof}[Proof of Theorem \ref{thm5}]
    For any $\lambda\in\R$, let $\Sigma_\lambda=\{t<\lambda\}$. The reflection of any $t\in\R$ about $\lambda$ is denoted by $t_\lambda:=2\lambda-t$. For a function $w\in H^1(\R)$, we define $w_\lambda(t)=w(t_\lambda)$. It is evident that $(\varphi_{u,\lambda},\varphi_{v,\lambda})$ also satisfies the system \eqref{ode2}. We introduce:
    \begin{equation*}
        \xi_\lambda(t):=\varphi_u(t)-\varphi_{u,\lambda}(t),\quad \zeta_\lambda(t):=\varphi_v(t)-\varphi_{v,\lambda}(t).
    \end{equation*}
    Testing the pair $(\xi_\lambda,\zeta_\lambda)$ with $(\xi_\lambda^+,\zeta_\lambda^+)$, we deduce the following identities:
    \begin{equation}\label{qwe}
        \begin{aligned}
            \int_{\Sigma_\lambda}|(\xi_\lambda^+)'|^2&=-\gamma\int_{\Sigma_\lambda}(\xi_\lambda^+)^2+\int_{\Sigma_\lambda}(\varphi_u^{p-1}-\varphi_{u,\lambda}^{p-1})\xi_\lambda^++\nu\alpha\int_{\Sigma_\lambda}(\varphi_u^{\alpha-1}\varphi_v^\beta-\varphi_{u,\lambda}^{\alpha-1}\varphi_{v,\lambda}^\beta)\xi_\lambda^+,\\
            \int_{\Sigma_\lambda}|(\zeta_\lambda^+)'|^2&=-\gamma\int_{\Sigma_\lambda}(\zeta_\lambda^+)^2+\int_{\Sigma_\lambda}(\varphi_v^{p-1}-\varphi_{v,\lambda}^{p-1})\zeta_\lambda^++\nu\beta\int_{\Sigma_\lambda}(\varphi_u^{\alpha}\varphi_v^{\beta-1}-\varphi_{v,\lambda}^{\alpha}\varphi_{v,\lambda}^{\beta-1})\xi_\lambda^+.
        \end{aligned}
    \end{equation}
    Next, we proceed as we did for $\I_3,\J_3,\I_4,\J_4$ and continue to use $C$ to denote a constant that depends only on $n,a,b,\nu,K,c_u,C_u,c_v,C_v,R_0$:
    \begin{equation}\label{qwe1}
        \begin{aligned}
            \int_{\Sigma_\lambda}(\varphi_u^{p-1}-\varphi_{u,\lambda}^{p-1})\xi_\lambda^+\leq&\; C\int_{\Sigma_\lambda}\varphi_{u}^{p-2}(\xi_\lambda^+)^2
            \leq C\norm{\varphi_u}^{p-2}_{L^\infty(\Sigma_\lambda)}\int_{\Sigma_\lambda}(\xi_\lambda^+)^2,\\
            \int_{\Sigma_\lambda}(\varphi_v^{p-1}-\varphi_{v,\lambda}^{p-1})\zeta_\lambda^+\leq&\; C\int_{\Sigma_\lambda}\varphi_{v}^{p-2}(\zeta_\lambda^+)^2
            \leq C\norm{\varphi_v}^{p-2}_{L^\infty(\Sigma_\lambda)}\int_{\Sigma_\lambda}(\zeta_\lambda^+)^2,
        \end{aligned}
    \end{equation}
    \begin{equation}\label{qwe2}
        \begin{aligned}
            \int_{\Sigma_\lambda}(\varphi_u^{\alpha-1}\varphi_v^\beta-\varphi_{u,\lambda}^{\alpha-1}\varphi_{v,\lambda}^\beta)\xi_\lambda^+=&\int_{\Sigma_\lambda}(\varphi_u^{\alpha-1}-\varphi_{u,\lambda}^{\alpha-1})\varphi_{v}^\beta\xi_\lambda^++\int_{\Sigma_\lambda}(\varphi_v^{\beta}-\varphi_{v,\lambda}^{\beta})\varphi_{u,\lambda}^{\alpha-1}\xi_\lambda^+\\
            \leq&\; C\int_{\Sigma_\lambda}\varphi_u^{p-2}(\xi_\lambda^+)^2+C\int_{\Sigma_\lambda}\varphi_v^{p-2}(\zeta_\lambda^+)^2\\
            \leq&\; C\norm{\varphi_u}^{p-2}_{L^\infty(\Sigma_\lambda)}\int_{\Sigma_\lambda}(\xi_\lambda^+)^2+C\norm{\varphi_v}^{p-2}_{L^\infty(\Sigma_\lambda)}\int_{\Sigma_\lambda}(\zeta_\lambda^+)^2,\\
            \int_{\Sigma_\lambda}(\varphi_u^{\alpha}\varphi_v^{\beta-1}-\varphi_{v,\lambda}^{\alpha}\varphi_{v,\lambda}^{\beta-1})\xi_\lambda^+\leq&\; C\norm{\varphi_u}^{p-2}_{L^\infty(\Sigma_\lambda)}\int_{\Sigma_\lambda}(\xi_\lambda^+)^2+C\norm{\varphi_v}^{p-2}_{L^\infty(\Sigma_\lambda)}\int_{\Sigma_\lambda}(\zeta_\lambda^+)^2.
        \end{aligned}
    \end{equation}
    Collecting the estimates from \eqref{qwe}, \eqref{qwe1}, and \eqref{qwe2}, we obtain
    \begin{equation}\label{qwe3}
        \int_{\Sigma_\lambda}\left(|(\xi_\lambda^+)'|^2+|(\zeta_\lambda^+)'|^2\right)\leq \left(C\norm{\varphi_u}^{p-2}_{L^\infty(\Sigma_\lambda)}+C\norm{\varphi_v}^{p-2}_{L^\infty(\Sigma_\lambda)}-\gamma\right)\int_{\Sigma_\lambda}\left((\xi_\lambda^+)^2+(\zeta_\lambda^+)^2\right).
    \end{equation}
    From \eqref{re4} and \eqref{emd}, we infer that as $\lambda$ approaches $-\infty$, $\norm{\varphi_u}_{L^\infty(\Sigma_\lambda)}+\norm{\varphi_v}_{L^\infty(\Sigma_\lambda)}$ tends to zero. Consequently, we observe that $\xi_\lambda(t)<0$ and $\zeta_\lambda(t)<0$ whenever $\lambda$ is sufficiently negative and $t<\lambda$. Analogously, we can establish that  $\xi_\lambda(t)>0$ and $\zeta_\lambda(t)>0$ if $\lambda$ is sufficiently positive and $t<\lambda$.

    Let's define
    \begin{equation*}
        \lambda_0:=\sup\{a\in\R\ |\ \xi_\lambda(t)<0,\zeta_\lambda(t)<0\text{ for any }\lambda\leq a\text{ and }t\in\Sigma_\lambda\}.
    \end{equation*}
    It suffices to derive that $\xi_{\lambda_0}=\zeta_{\lambda_0}\equiv 0$. If not, according to the strong maximum principle, we would have $\xi_{\lambda_0}(t)<0$ and $\zeta_{\lambda_0}(t)<0$ for any $t\in\Sigma_{\lambda_0}$. Since the remaining procedures are essentially the same as those in \emph{Step 2} of the proof for Theorem \ref{thm3}, we will skip  the details here.
\end{proof}

\section{Proofs of Theorems \ref{thm1} and \ref{thm8}}\label{sec4}
The objective of this section is to characterize positive solutions to the systems \eqref{sys1} and \eqref{sys3}. We commence by providing the proof for the general uniqueness result (Theorem \ref{thm8}), employing a straightforward ODE technique from \cite{Cho}. Building upon this result, we are then able to derive Theorem \ref{thm1}, using arguments that are similar but slightly simpler than those found in \cite{Esp2,Wei}.
\begin{proof}[Proof of Theorem \ref{thm8}]
    Since the system \eqref{sys3} is invariant under dilations, we may assume $\theta=1$. From Theorem \ref{thm7}, any positive solution of the system \eqref{sys3} is radially symmetric about the origin. Thus, we can rewrite this system in radial form:
    \begin{equation}\label{ode3}
        \begin{cases}
            (r^{n-1-2a}u_i')'+r^{n-1-bp}\sum\limits_{j=1}^k\kappa_{ij}u_i^{\alpha_{ij}-1}u_j^{\beta_{ij}}=0\quad\text{in }(0,+\infty)\\
            u_i\in C^{\infty}\left((0,+\infty\right))\cap C\left([0,+\infty\right)),\quad u_i>0\ \text{in }[0,+\infty)\quad\text{for }1\leq i\leq k.
        \end{cases}
    \end{equation}
    Since $u_i$ is continuous at 0, we must have $\lim\limits_{r\rightarrow0}r^{n-1-2a}u_i'=0$, which implies that
    \begin{equation}\label{int}
        u_i(r)=-\int_0^rs^{2a+1-n}\int_0^st^{n-1-bp}\sum\limits_{j=1}^k\kappa_{ij}u_i^{\alpha_{ij}-1}(t)u_j^{\beta_{ij}}(t)\mathrm{d}t\ \mathrm{d} s+u_i(0)
    \end{equation}
    for any $1\leq i\leq k$. Analogously, we obtain
    \begin{equation}\label{int1}
        v_i(r)=-\int_0^rs^{2a+1-n}\int_0^st^{n-1-bp}\sum\limits_{j=1}^k\kappa_{ij}v_i^{\alpha_{ij}-1}(t)v_j^{\beta_{ij}}(t)\mathrm{d}t\ \mathrm{d} s+v_i(0)
    \end{equation}
    for any $1\leq i\leq k$. Set
    \begin{equation*}
        \lambda=\sup\{a\in[0,+\infty)\ |\ u_i(r)=v_i(r)\ \text{for any }r\leq a,1\leq i\leq k\}.
    \end{equation*}
    It suffices to show $\lambda=+\infty$. If not, let us take $\epsilon>0$ sufficiently small. For any $\lambda< r\leq \lambda+\epsilon$, substracting \eqref{int} by \eqref{int1} gives
    \begin{equation*}
    \begin{aligned}
        u_i(r)-v_i(r)=&\int_0^rs^{2a+1-n}\int_0^st^{n-1-bp}\sum\limits_{j=1}^k\kappa_{ij}\left(v_i^{\alpha_{ij}-1}(t)v_j^{\beta_{ij}}(t)-u_i^{\alpha_{ij}-1}(t)u_j^{\beta_{ij}}(t)\right)\mathrm{d}t\ \mathrm{d}s\\
        =&\int_\lambda^rs^{2a+1-n}\int_\lambda^st^{n-1-bp}\sum\limits_{j=1}^k\kappa_{ij}\left(v_i^{\alpha_{ij}-1}(t)v_j^{\beta_{ij}}(t)-u_i^{\alpha_{ij}-1}(t)u_j^{\beta_{ij}}(t)\right)\mathrm{d}t\ \mathrm{d}s
    \end{aligned}
    \end{equation*}
    for any $1\leq i\leq k$. Then we can estimate
    \begin{equation}\label{int2}
        \max\limits_{[\lambda,\lambda+\epsilon]}|u_i-v_i|\leq C\epsilon^{2a+2-bp}\sum\limits_{j=1}^k\max\limits_{[\lambda,\lambda+\epsilon]}|u_j-v_j|,
    \end{equation}
    where $1\leq i\leq k$ and $C$ is a constant depending on $a,b,p,k,\lambda,u_j,v_j,\kappa_{jl},\alpha_{jl},\beta_{jl},\ 1\leq j,l\leq k$. If we choose $\epsilon$ small enough such that $C\epsilon^{2a+2-bp}<\frac{1}{k}$, then by summing \eqref{int2} with respect to $1\leq i\leq k$, we immediately deduce that
    \begin{equation*}
        u_i(r)\equiv v_i(r)\quad\text{for any }r\leq \lambda+\epsilon,1\leq i\leq k.
    \end{equation*}
    However, this contradicts the choice of $\lambda$.
\end{proof}
\begin{proof}[Proof of Theorem \ref{thm1}]
    From Theorem \ref{thm8}, it suffices to demonstrate that $(u(0),v(0))$ can be expressed as $\left(c_1\mu,c_2\mu\right)$ with $\mu>0$ and $(c_1,c_2)$ solving the system \eqref{re2}. Due to homogeneity, we only need to check if, by setting $L:=\frac{u(0)}{v(0)}$, the condition $f(L)=0$ holds, where $f$ is defined by
    $$f(t)=t^{p-2}+\nu\alpha t^{\alpha-2}-1-\nu\beta t^\alpha.$$
    In the following, we concentrate on the ODE system \eqref{ode2}, which is equivalent to the system \eqref{sys1}. Thanks to Theorem \ref{thm5}, we assume that $\varphi_u$ and $\varphi_v$ are symmetric about 0 and strictly decreasing in $(0,+\infty)$. Multiplying the two equations in the system \eqref{ode2} by $\varphi_v$ and $\varphi_u$, respectively, and then subtracting the results, we deduce
    \begin{equation}\label{zxc}
        (\varphi_u'\varphi_v-\varphi_u\varphi_v')'+\varphi_u\varphi_v^{p-1}f\left(\frac{\varphi_u}{\varphi_v}\right)=0.
    \end{equation}
    From the relation \eqref{emd}, we have $\lim\limits_{t\rightarrow-\infty}\frac{\varphi_u(t)}{\varphi_v(t)}=L$. If $f(L)\neq 0$, without loss of generality, we can assume $f(L)<0$. Thus, for any $t$ sufficiently negative, we have $f\left(\frac{\varphi_u(t)}{\varphi_v(t)}\right)<0$. Integrating \eqref{zxc} over $(-\infty,0]$, we obtain
    \begin{equation*}
        \int_{(-\infty,0]}\varphi_u\varphi_v^{p-1}f\left(\frac{\varphi_u}{\varphi_v}\right)=0,
    \end{equation*}
    which implies the existence of $t_0<0$ such that $f\left(\frac{\varphi_u(t_0)}{\varphi_v(t_0)}\right)=0$ and $f\left(\frac{\varphi_u(t)}{\varphi_v(t)}\right)<0$ for any $t<t_0$. Set $L_0=\frac{\varphi_u(t_0)}{\varphi_v(t_0)}$. Integrating \eqref{zxc} over $(-\infty,t]$ for $t\leq t_0$, we get
    \begin{equation}\label{zxc1}
        \varphi_u'(t)\varphi_v(t)-\varphi_u(t)\varphi_v'(t)>0.
    \end{equation}
    Next, we multiply the two equations in the system \eqref{ode2} by $\varphi_u'$ and $L_0^2\varphi_v'$, respectively, subtracting the results, and then integrating over $(-\infty,t_0]$. This leads to the following relation:
    \begin{equation}\label{zxc2}
    \begin{aligned}
         (\varphi_u')^2(t_0)-L_0^2(\varphi_v')^2(t_0)=2\int_{(-\infty,t_0]}L_0^2p^{-1}(\varphi_v^p)'-p^{-1}(\varphi_u^p)'+\nu L_0^2\varphi_u^\alpha(\varphi_v^\beta)'-\nu(\varphi_u^\alpha)'\varphi_v^{\beta}.
    \end{aligned}
    \end{equation}
    By the definition of $L_0$, it holds that
    \begin{equation}\label{zxc3}
       \int_{(-\infty,t_0]}(\varphi_u^p)'=L_0^p\int_{(-\infty,t_0]}(\varphi_v^p)'=L_0^\beta\int_{(-\infty,t_0]}(\varphi_u^\alpha\varphi_v^\beta)'.
    \end{equation}
    Combining \eqref{zxc2} and \eqref{zxc3} gives:
    \begin{equation}\label{zxc4}
    \begin{aligned}
         (\varphi_u')^2(t_0)-L_0^2(\varphi_v')^2(t_0)=&\;2\int_{(-\infty,t_0]}\left(L_0^{2-p}-1\right)L_0^\beta p^{-1}(\varphi_u^\alpha\varphi_v^\beta)'+\nu L_0^2\varphi_u^\alpha(\varphi_v^\beta)'-\nu(\varphi_u^\alpha)'\varphi_v^{\beta}\\
         =&\;2\int_{(-\infty,t_0]}\beta\left(L_0^{2-\alpha}p^{-1}-L_0^\beta p^{-1}+\nu L_0^2\right)\varphi_u^\alpha\varphi_v^{\beta-1}\varphi_v'\\
         &+2\int_{(-\infty,t_0]}\alpha\left(L_0^{2-\alpha}p^{-1}-L_0^\beta p^{-1}-\nu\right)\varphi_u^{\alpha-1}\varphi_u'\varphi_v^\beta.
    \end{aligned}
    \end{equation}
    Recalling that $f(L_0)=0$, we observe that
    \begin{equation*}
        \beta\left(L_0^{2-\alpha}p^{-1}-L_0^\beta p^{-1}+\nu L_0^2\right)+\alpha\left(L_0^{2-\alpha}p^{-1}-L_0^\beta p^{-1}-\nu\right)=0
    \end{equation*}
    and
    \begin{equation*}
        \begin{aligned}
        \beta\left(L_0^{2-\alpha}p^{-1}-L_0^\beta p^{-1}+\nu L_0^2\right)=&\;\frac{\alpha}{p}\beta\left(L_0^{2-\alpha}p^{-1}-L_0^\beta p^{-1}+\nu L_0^2\right)\\
        &-\frac{\beta}{p}\alpha\left(L_0^{2-\alpha}p^{-1}-L_0^\beta p^{-1}-\nu\right)\\
        =&\;\frac{\alpha\beta\nu}{p}\left(L_0^2+1\right).
        \end{aligned}
    \end{equation*}
    Hence \eqref{zxc4} reduces to
    \begin{equation}\label{zxc5}
        (\varphi_u')^2(t_0)-L_0^2(\varphi_v')^2(t_0)=2\frac{\alpha\beta\nu}{p}\left(L_0^2+1\right)\int_{(-\infty,t_0]}\varphi_u^{\alpha-1}\varphi_v^{\beta-1}(\varphi_u\varphi_v'-\varphi_u'\varphi_v).
    \end{equation}
    From the estimates \eqref{zxc1} and  \eqref{zxc5}, we get $(\varphi_u')^2(t_0)-L_0^2(\varphi_v')^2(t_0)<0$. However, from the monotonicity of $u,v$, the definition of $L_0$, and \eqref{zxc1}, we have $\varphi_u'(t_0)>L_0\varphi_v'(t_0)\geq 0$. This gives the desired contradiction.
\end{proof}

\section{Proofs of Theorems \ref{ground}, \ref{vckn} and \ref{non}}\label{sec5}
The aim of this section is to investigate nonnegative ground states for the system \eqref{sys1}. We start by establishing the vector-valued Caffarelli-Kohn-Nirenberg inequality, whose Euler-Lagrange equation is precisely the system \eqref{sys1}.

\begin{proof}[Proof of Theorem \ref{vckn}]
    Utilizing the inequality \eqref{ckn}, we have
    \begin{equation}\label{qaz}
        \int_{\R^n}|x|^{-2a}(|\nabla u|^2+|\nabla v|^2)\geq S(a,b,n)\left(\left(\int_{\R^n}|x|^{-bp}|u|^p\right)^{\frac{2}{p}}+\left(\int_{\R^n}|x|^{-bp}|v|^p\right)^{\frac{2}{p}}\right).
    \end{equation}
    From the H\"older inequality, we have
    \begin{equation}\label{qaz1}
    \int_{\R^n}|x|^{-bp}|u|^\alpha |v|^\beta\leq \left(\int_{\R^n}|x|^{-bp}|u|^p\right)^{\frac{\alpha}{p}}\left(\int_{\R^n}|x|^{-bp}|v|^p\right)^{\frac{\beta}{p}}.
    \end{equation}
    Assuming
    \begin{equation}\label{qaz2}
        \int_{\R^n}|x|^{-bp}|u|^p=x_1^p,\quad \int_{\R^n}|x|^{-bp}|v|^p=x_2^p,
    \end{equation}
    we can obtain
    \begin{equation}\label{qaz3}
       \begin{aligned}
          \frac{\int_{\R^n}|x|^{-2a}(|\nabla u|^2+|\nabla v|^2)}{\left(\int_{\R^n}|x|^{-bp}(|u|^p+|v|^p+p\nu|u|^{\alpha}|v|^\beta)\right)^{\frac{2}{p}}}\geq S(a,b,n)f(x_1,x_2).
       \end{aligned}
    \end{equation}
    The equality holds if and only if both \eqref{qaz} and \eqref{qaz1} become equalities, indicating that $u$ and $v$ must be constant multiples of some minimizer of the inequality \eqref{ckn}. Since $f$ is a homogeneous function of degree zero, the minima always exists. Now it is easy to see that the sharp constant is given by $S(a,b,n)\min\limits_{x_1,x_2} f(x_1,x_2)$, and extremal manifold consists of  pairs $(c_1W,c_2W)$, where $W$ is a minimizer of the inequality \eqref{ckn} and $(c_1,c_2)$ is a minima of $f$.
\end{proof}
The characterization of nonnegative ground states follows straightforwardly from Theorem \ref{vckn}.
\begin{proof}[Proof of Theorem \ref{ground}]
    Suppose $(u,v)$ is a nontrivial solution to the system \eqref{sys1}. By multiplying the two equations by $u$ and $v$, respectively, and then adding the results, we deduce that
    \begin{equation}\label{qsc1}
        \int_{\R^n}|x|^{-2a}(|\nabla u|^2+|\nabla v|^2)=\int_{\R^n}|x|^{-bp}|u|^p+\int_{\R^n}|x|^{-bp}|v|^p+p\nu\int_{\R^n}|x|^{-bp}|u|^\alpha |v|^\beta.
    \end{equation}
    From Theorem \ref{vckn}, we have
    \begin{equation}\label{qsc2}
        \frac{\int_{\R^n}|x|^{-2a}(|\nabla u|^2+|\nabla v|^2)}{\left(\int_{\R^n}|x|^{-bp}(|u|^p+|v|^p+p\nu|u|^{\alpha}|v|^\beta)\right)^{\frac{2}{p}}}\geq \Bar{S}(a,b,n)
    \end{equation}
    Combining \eqref{qsc1} and \eqref{qsc2} indicates that
    \begin{equation}\label{qsc3}
         \int_{\R^n}|x|^{-2a}(|\nabla u|^2+|\nabla v|^2)\geq \Bar{S}(a,b,n)^{\frac{p}{p-2}}.
    \end{equation}
    The equality holds if and only $(u,v)$ takes the form $(sc_1W,sc_2W)$, where $W$ is a ground states for the equation \eqref{eq1}, $(c_1,c_2)$ is a minima of $f$ satisfying $c_1+c_2=1$, and $s$ is a normalization factor. Using \eqref{qsc1}, we obtain
    \begin{equation*}
        E(u,v)=\left(\frac{1}{2}-\frac{1}{p}\right)\int_{\R^n}|x|^{-2a}(|\nabla u|^2+|\nabla v|^2)\geq \left(\frac{1}{2}-\frac{1}{p}\right)\Bar{S}(a,b,n)^{\frac{p}{p-2}}.
    \end{equation*}
    Thus, $E(u,v)$ attains its minimum precisely when \eqref{qsc3} becomes an equality. Our assertions follow immediately.
\end{proof}
Let us give some clarification for Remark \ref{rem}. We focus on the three cases defined in \eqref{cas}.

In the first case: $\min\{\alpha,\beta\}<2$, without loss of generality, we assume $\alpha<2$. A crucial observation is the following basic inequality:
\begin{equation*}
    (1+x)^\epsilon>1+c(\epsilon)x,\quad\text{for any }0<x<1,\,0<\epsilon\leq1,
\end{equation*}
where $c(\epsilon)>0$ is a universal constant. Hence, for $0<x\ll 1$, we have
\begin{equation*}
    (1+x^p+p\nu x^\alpha)^\frac{2}{p}>1+c(p)p\nu x^\alpha>1+x^2,
\end{equation*}
implying that $f(x,1)<1$. Since $f(0,1)=f(1,0)=1$, we conclude that every minimum of $f$ is positive.

In the second case: $\min\{\alpha,\beta\}\geq2$ and $\nu>p^{-1}(2^{\frac{p}{2}}-2)$, one can directly check that $f(1,1)<1$, indicating the positivity of all nonnegative ground states.

In the third case: $\min\{\alpha,\beta\}\geq2$ and $\nu\leq \frac{p-2}{2p}$, utilizing the Bernoulli inequality
\begin{equation*}
    (1+x)^\epsilon<1+\epsilon x,\quad\text{for any }0<x,\,0<\epsilon<1,
\end{equation*}
we obtain
\begin{equation*}
    (1+x^p+p\nu x^\alpha)^{\frac{2}{p}}<1+\frac{2}{p}x^p+2\nu x^{\alpha}\leq1+x^2,\quad \text{when }0<x\leq 1,
\end{equation*}
which implies that $f(x,y)<1$ for any $0<x\leq y$. Similarly, one can show that $f(x,y)<1$ for any $0<y\leq x$. Thus, the minimum of $f$ can only be achieved at points $(x,0)$ and $(0,y)$.

In the following, we consider the nondegeneracy of positive synchronized solutions. Our proof relies on the following decoupled version given by Felli and Schneider in \cite{Fel}.
\begin{lemma}\label{lem}
    Assume $a<0,b_{\mathrm{FS}}(a)<b$ or $a\geq 0,b\neq 0$. Denote by $\{\lambda_k\}_{k=1}^\infty$ the eigenvalues of the problem
    \begin{equation*}
         -\div(|x|^{-2a}\nabla u)=\lambda|x|^{-bp}U^{p-2}u,\quad u\in D_a^{1,2}(\R^n),
    \end{equation*}
    where $U$ is defined in \eqref{bub}. Then, we have
    \begin{equation*}
        \lambda_1=1,\quad\lambda_2=p-1,\quad\lambda_3>p-1.
    \end{equation*}
    The corresponding eigenfunctions for $\lambda_1$ and $\lambda_2$ are given by $U$ and $\partial_{\mu}U$, respectively.
\end{lemma}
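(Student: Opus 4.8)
The plan is to linearize at $U$ and reduce to a one-dimensional spectral problem via the Emden--Fowler change of variables of Section~\ref{sec3}. Since $b\neq 0$ forces $U$ to be radial, writing $u=|x|^{-\frac{n-2-2a}{2}}\psi(t,\theta)$ with $t=-\ln|x|$, $\theta\in\S^{n-1}$, the eigenvalue equation $-\div(|x|^{-2a}\nabla u)=\lambda|x|^{-bp}U^{p-2}u$ becomes
\begin{equation*}
  -\psi_{tt}-\Delta_{\S^{n-1}}\psi+\gamma\psi=\lambda\,\varphi^{p-2}\psi\qquad\text{on }\R\times\S^{n-1},
\end{equation*}
where $\gamma=\left(\tfrac{n-2-2a}{2}\right)^2$ and $\varphi=\varphi(t)$ is the even positive solution of $-\varphi''+\gamma\varphi=\varphi^{p-1}$ on $\R$ corresponding to $U$ (cf. \eqref{ode2}). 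Expanding $\psi=\sum_k\psi_k(t)Y_k(\theta)$ into spherical harmonics, $-\Delta_{\S^{n-1}}Y_k=\sigma_kY_k$ with $0=\sigma_0<\sigma_1=n-1\le\sigma_2\le\cdots$, one is left in each sector with the scalar problem $-\psi_k''+(\gamma+\sigma_k)\psi_k=\lambda\varphi^{p-2}\psi_k$ on $\R$. As $\varphi^{p-2}$ decays exponentially, the weighted embedding $H^1(\R)\hookrightarrow L^2(\R;\varphi^{p-2}\de t)$ is compact, so each sector has a discrete, simple, strictly increasing spectrum $0<\lambda^{(k)}_1<\lambda^{(k)}_2<\cdots$, and the spectrum of the original problem is the union of these with multiplicities $\dim\{Y_k\}$. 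Since $\sigma_k$ is nondecreasing, $\lambda^{(k)}_j$ is nondecreasing in $k$, so only the sectors $k=0$ and $k=1$ matter.

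In the sector $k=0$, the function $\varphi$ solves $-\varphi''+\gamma\varphi=\varphi^{p-2}\cdot\varphi$, so $1$ is an eigenvalue; being positive, $\varphi$ is the ground state, and after lifting back it is realized by $U$. Thus $\lambda_1=1$, simple by the strong maximum principle. Differentiating $-\div(|x|^{-2a}\nabla U_\mu)=|x|^{-bp}U_\mu^{p-1}$ in $\mu$ at $\mu=1$ shows $\partial_\mu U$ is an eigenfunction with eigenvalue $p-1$; since $U_\mu$ corresponds to $\varphi(\cdot+\ln\mu)$ in the cylinder, $\partial_\mu U$ corresponds up to a scalar to $\varphi'$, which is odd and, by the strict monotonicity in Theorem~\ref{thm5}, has exactly one zero. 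By Sturm oscillation theory $\varphi'$ is therefore the second eigenfunction of the $k=0$ sector, so $\lambda^{(0)}_2=p-1$ is simple and $\lambda^{(0)}_3>p-1$.

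It remains to show $\lambda^{(1)}_1>p-1$: combined with the above, this gives $\lambda^{(k)}_1\ge\lambda^{(1)}_1>p-1$ for every $k\ge 1$, so $p-1$ is a simple eigenvalue of the full problem with eigenfunction $\partial_\mu U$, and $\lambda_3=\min\{\lambda^{(0)}_3,\lambda^{(1)}_1\}>p-1$, completing the proof. For this one uses the explicit expression $\varphi^{p-2}(t)=\tfrac{\gamma p}{2}\cosh^{-2}\!\left(\tfrac{(p-2)\sqrt{\gamma}}{2}t\right)$, which turns the $k=1$ equation into a P\"oschl--Teller problem whose sign-definite (hence ground-state) eigenfunction is $\cosh^{-s_1}\!\left(\tfrac{(p-2)\sqrt{\gamma}}{2}t\right)$ with $s_1=\tfrac{2\sqrt{\gamma+n-1}}{(p-2)\sqrt{\gamma}}$; substituting yields the closed form $\lambda^{(1)}_1=\tfrac{s_1(s_1+1)(p-2)^2}{2p}$. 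A direct computation then shows that $\lambda^{(1)}_1=p-1$ precisely when $b=b_{\mathrm{FS}}(a)$ and that $\lambda^{(1)}_1-(p-1)$ is strictly monotone in $b$ for fixed $a$, so $b>b_{\mathrm{FS}}(a)$ forces $\lambda^{(1)}_1>p-1$. This strict inequality is exactly what the hypotheses give: for $a\ge 0$ one has $b_{\mathrm{FS}}(0)=0$ and $b_{\mathrm{FS}}(a)<a$ when $a>0$, so $b\ge a$ together with $b\neq 0$ already forces $b>b_{\mathrm{FS}}(a)$.

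The routine ingredients are the separation of variables, the Sturm oscillation theory, and the simplicity statements. The genuine content, and the main obstacle, is the last step: exhibiting the $k=1$ ground state in closed form and carrying out the comparison of $\lambda^{(1)}_1$ with $p-1$. This is precisely the Felli--Schneider spectral estimate of \cite{Fel}, which may be quoted directly.
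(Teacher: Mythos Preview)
Your argument is correct. The paper, however, does not prove this lemma at all: it merely states it and attributes it to Felli--Schneider \cite{Fel}. What you have written is essentially a faithful sketch of the Felli--Schneider computation itself---the Emden--Fowler reduction to the cylinder, the spherical-harmonic decomposition, the Sturm identification of $\varphi$ and $\varphi'$ as the first two radial eigenfunctions, and the explicit P\"oschl--Teller ground state in the $k=1$ sector giving $\lambda_1^{(1)}=\tfrac{s_1(s_1+1)(p-2)^2}{2p}$. Your verification that $\lambda_1^{(1)}=p-1$ exactly on the Felli--Schneider curve and that the gap is monotone in $b$ is also right (the key point being that $s_1(p-2)=2\sqrt{1+(n-1)/\gamma}$ is independent of $b$). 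So rather than differing from the paper's proof, you have supplied the proof the paper chose to outsource.

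One minor remark: your appeal to Theorem~\ref{thm5} for the monotonicity of $\varphi$ is slightly misplaced, since that theorem concerns the coupled system; but the monotonicity of the single profile $\varphi(t)=A\cosh^{-2/(p-2)}(Bt)$ is immediate from its explicit form, so nothing is lost.
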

Our arguments below are inspired by \cite[Theorem 1.4]{Pen}.
\begin{proof}[Proof of Theorem \ref{non}]
    Without loss of generality, we assume $(u,v)=(c_1U,c_2U)$ with $U$ defined in \eqref{bub}. Suppose $(\varphi,\psi)$ is a nontrivial solution to the linearized system
    \begin{equation}\label{lin}
        \begin{cases}
            -\div(|x|^{-2a}\nabla \varphi)=|x|^{-bp}U^{p-2}(\theta_{11}\varphi+\theta_{12}\psi)\\
            -\div(|x|^{-2a}\nabla \psi)=|x|^{-bp}U^{p-2}(\theta_{21}\varphi+\theta_{22}\psi),
        \end{cases}
    \end{equation}
    where
    \begin{equation*}
        \theta_{11}=(p-1)c_1^{p-2}+\nu\alpha(\alpha-1)c_1^{\alpha-2}c_2^\beta,\quad \theta_{22}=(p-1)c_2^{p-2}+\nu\beta(\beta-1)c_1^\alpha c_2^{\beta-2}
    \end{equation*}
    and
    \begin{equation*}
        \theta_{12}=\theta_{21}=\nu\alpha\beta c_1^{\alpha-1}c_2^{\beta-1}.
    \end{equation*}
    Since $(c_1,c_2)$ is a solution to the system \eqref{re2}:
    \begin{equation*}
        \begin{cases}
            c_1^{p-2}+\nu\alpha c_1^{\alpha-2}c_2^\beta=1\\
            c_2^{p-2}+\nu\beta c_1^{\alpha}c_2^{\beta-2}=1,
        \end{cases}
    \end{equation*}
    we can simplify the representations of $\theta_{11}$ and $\theta_{22}$:
    \begin{equation*}
        \theta_{11}=p-1-\nu\alpha\beta c_1^{\alpha-2}c_2^\beta,\quad \theta_{22}=p-1-\nu\alpha\beta c_1^{\alpha}c_2^{\beta-2}.
    \end{equation*}
    Set $\gamma:=\frac{\theta_{11}-\theta_{22}-\sqrt{(\theta_{11}-\theta_{22})^2+4\theta_{12}^2}}{2\theta_{12}}=-\frac{c_2}{c_1}$. Multiplying the two equations in \eqref{lin} by 1 and $-\gamma$, respectively, and adding the results, we obtain
    \begin{equation*}
        -\div\left(|x|^{-2a}\nabla (\varphi-\gamma\psi)\right)=(p-1)|x|^{-bp}U^{p-2}(\varphi-\gamma\psi).
    \end{equation*}
    By Lemma \ref{lem}, we have $\varphi-\gamma\psi=\Lambda\partial_{\mu}U$ for some $\Lambda\in\R$. Thus, \eqref{lin} reduces to
    \begin{equation*}
    \begin{aligned}
        -\div(|x|^{-2a}\nabla \psi)=&\;|x|^{-bp}U^{p-2}\left(\theta_{21}\Lambda\partial_{\mu}U+(\theta_{22}+\theta_{21}\gamma)\psi\right)\\
        =&\;(p-1)\Lambda\nu\alpha\beta c_1^{\alpha-1}c_2^{\beta-1}\partial_{\mu}U\\
        &+(p-1-\nu\alpha\beta c_1^{\alpha-2}c_2^{\beta}-\nu\alpha\beta c_1^{\alpha}c_2^{\beta-2})|x|^{-bp}U^{p-2}\psi.
    \end{aligned}
    \end{equation*}
    Note that the nondegeneracy of $(u,v)$ is equivalent to the assertion that any solution to the system \eqref{lin} must be proportional to the pair $(c_1\partial_{\mu}U,c_2\partial_{\mu}U)$. It remains to verify whether we have
    \begin{equation*}
        p-1-\nu\alpha\beta c_1^{\alpha-2}c_2^{\beta}-\nu\alpha\beta c_1^{\alpha}c_2^{\beta-2}\neq \lambda_k\quad\text{for any }k\neq 2.
    \end{equation*}
    For $k\geq 3$, this holds clearly due to the fact that $\lambda_k>p-1$. For $k=1$, however, it is generally not evident to claim the incompatibility between the system \eqref{lin} and
    \begin{equation}\label{zzz}
        \nu\alpha\beta c_1^{\alpha-2}c_2^{\beta}+\nu\alpha\beta c_1^{\alpha}c_2^{\beta-2}= p-2.
    \end{equation}
    In the particular case $\nu\leq \frac{p-2}{2\alpha\beta}$, \eqref{zzz} does not hold due to the fact that $c_1,c_2<1$.
\end{proof}

\end{document}